\definecolor{bluecite}{HTML}{0875b7}
\newcommand{\R}{{\mathbb R}}
\newtheorem{theorem}{Theorem}[section]
\newtheorem{lemma}{Lemma}[section]
\newtheorem{corollary}{Corollary}[section]
\newtheorem{remark}{Remark}[section]
\numberwithin{equation}{section}
\author{Zolt\'an M. Balogh,  Alexandru Krist\'aly and \'Agnes Mester}	
\address{Mathematisches Institute,
	Universit\"at Bern,
	Sidlerstrasse 5,
	3012 Bern, Switzerland}
\email{zoltan.balogh@unibe.ch}
\address{Department of Economics, Babe\c s-Bolyai University, Str. Teodor Mihali 58-60, 400591 Cluj-Napoca,
	Romania \& Institute of Applied Mathematics, \'Obuda
	University, B\'ecsi \'ut 96/B, 1034
	Budapest, Hungary}
\email{alexandru.kristaly@ubbcluj.ro; kristaly.alexandru@uni-obuda.hu}
\address{Mathematisches Institute,
	Universit\"at Bern,
	Sidlerstrasse 5,
	3012 Bern, Switzerland 
	\& Faculty of Mathematics and Computer Science, Babe\c s-Bolyai University, Mihail Kogalniceanu Str. 1,
	400084 Cluj-Napoca, Romania
	}
\email{agnes.mester@unibe.ch; agnes.mester@ubbcluj.ro
}
\subjclass[]{ 
	%53C17, %   Sub-Riemannian geometry
	%53C60,   % Finsler spaces and generalizations 
	% 53C30,  % Homogeneous manifolds
	%22E25, % Nilpotent and solvable Lie groups
	% 28A75,  %  Length, area, volume, other geometric measure theory
	28A25, % Integration with respect to measures and other set functions
	26D15,
	46E35, % Sobolev spaces and other spaces of ?smooth? functions, embedding theorems, trace theorems
	%49N60, % Regularity of solutions 
	%49Q15, %  Geometric measure and integration theory, integral and normal currents
	49Q22, % Optimal transportation
	%53C38% Calibrations and calibrated geometries
	%58C35 % Integration on manifolds; measures on manifolds
	%26A16,  % Lipschitz (Hlder) classes
	%26B20 Integral formulas (Stokes, Gauss, Green, etc.)
	%54Exx, % Spaces with richer structures 
	%37L40 %Invariant measures
	%58D05, %Groups of diffeomorphisms and homeomorphisms as manifolds
	%22F50, %Groups as automorphisms of other structures
	% 22DXX % Locally compact groups and their algebras
	% 22F30. % Homogeneous spaces
	%14M17. %Homogeneous spaces and generalizations (within Algebraic geometry)
	% 53C30 % Homogeneous manifolds
	% 58D19 % Group actions and symmetry properties
	% 58C25 % Differentiable maps
	58J60, %Relations of PDEs with special manifold structures (Riemannian, Finsler, etc.)
	%35F20, %Nonlinear first-order PDE
	%35F50. %Systems of nonlinear first-order PDEs
}
\keywords{Submanifolds, Sobolev inequality, asymptotically sharp, optimal mass transport.}
\thanks{Z. M. Balogh and \'A. Mester are
	supported by the Swiss National Science Foundation, Grant Nr. {200021\_228012}. 
	\thanks{A.\ Krist\'aly is  supported by the
	Excellence Researcher Program \'OE-KP-2-2022 of \'Obuda University, Hungary.} 
	}
\title[Sobolev inequalities on minimal submanifolds]{
	%An asymptotically sharp 
	$L^p$-Sobolev inequalities on minimal submanifolds}
\begin{document}
	%\vspace{-1cm}
	\begin{abstract}
		  The  paper is devoted to proving Allard-Michael-Simon-type $L^p$-Sobolev inequalities $(p>1)$   with explicit constants in the setting of Euclidean minimal submanifolds of arbitrary codimension. Our results require separate discussions for the cases $p\geq 2$ and $1<p<2$, respectively. In particular, for $p\geq 2$, we obtain an asymptotically sharp and codimension-free Sobolev constant. Our argument is based on optimal mass transport theory on Euclidean submanifolds and also provides an alternative, unified proof of the recent isoperimetric inequalities of Brendle (\textit{J.\ Amer.\ Math.\ Soc., 2021}) and Brendle and Eichmair (\textit{Notices Amer. Math. Soc., 2024}). 
% The old version: 
%The  paper is devoted to prove Allard--Michael--Simon-type $L^p$-Sobolev $(p>1)$ inequalities  with explicit constants on Euclidean submanifolds of any codimension. Such inequalities contain, beside the Dirichlet $p$-energy, a term involving the mean curvature of the submanifold. Our results require separate discussions for the cases $p\geq 2$ and $1<p<2$, respectively; in particular, for $p\geq 2$, the coefficient in front of the Dirichlet $p$-energy is asymptotically sharp and codimension-free. Our argument is based on optimal mass transport theory on Euclidean submanifolds and it also provides an alternative, unified proof of the recent isoperimetric inequalities of Brendle (J.\ Amer.\ Math.\ Soc., 2021) and Brendle and Eichmair (Notices Amer. Math. Soc.,
	\end{abstract}
	\maketitle 
	
\vspace{-0.4cm}
	
		\section{Introduction and Results}

Sobolev inequalities are of crucial importance in the theory of partial differential equations, relating the Lebesgue norm of a function to its Dirichlet energy which measures its variation. The most famous inequality in this direction is certainly the  classical $L^p$-Sobolev inequality in the Euclidean space $\mathbb R^n$.  Namely, let $n\geq 2$ and $p\in (1,n),$ where $p'=\frac{p}{p-1}$ is the dual exponent of $p$ and $p^\star=\frac{pn}{n-p}$ is its critical Sobolev exponent. Then one has that
	\begin{equation}\label{Sobolev-000}
		\displaystyle  { \left(\int_{\mathbb R^n} |f|^{p^\star}{d}x\right)^{1/p^\star}\leq
			AT(n,p)
			\left(\int_{\mathbb R^n} |\nabla f|^p { d}x\right)^{1/p}},\ \ \forall  f\in C_0^\infty(\mathbb R^n).
	\end{equation}
	In this inequality, the best Sobolev constant, computed first by Aubin \cite{Aubin} and Talenti \cite{Talenti}, is given by 
	$$AT(n,p)=\pi^{-\frac{1}{2}}n^{-\frac{1}{p}} \left(\frac{p-1}{n-p}\right)^{1/p'}\left(\frac{\Gamma(\frac{n}{2}+1)\Gamma(n)}{\Gamma(\frac{n}{p})\Gamma(\frac{n}{p'}+1)}
	\right)^{{1}/{n}}.$$  
	When $p\to 1$, \eqref{Sobolev-000} reduces to  the classical sharp isoperimetric inequality in functional form, namely, 
		\begin{equation}\label{isop-000}
		\displaystyle  { \left(\int_{\mathbb R^n} |f|^\frac{n}{n-1}{ d}x\right)^\frac{n-1}{n}\leq
			\frac{1}{n\omega_n^{{1}/{n}}}
			\int_{\mathbb R^n} |\nabla f| {d}x},\ \ \forall  f\in C_0^\infty(\mathbb R^n),
	\end{equation}
 where $\omega_n$ denotes the volume of the unit ball in $\mathbb R^n.$
	
Motivated by various problems arising in differential geometry, analysis and PDEs on manifolds, the theory of Sobolev inequalities on \textit{Euclidean submanifolds} attracted considerable attention. In this context, it is expected that, beside the usual terms in \eqref{Sobolev-000}, the mean curvature of the submanifold should also play a role. 

To be more precise, let $m,n \geq 1$ and $\Sigma\subset \mathbb R^{n+m}$ be a complete $n$-dimensional submanifold, possibly with boundary $\partial \Sigma$, $H$ the mean curvature vector of $\Sigma$,  $\nabla^\Sigma$ the gradient associated to $\Sigma$,  ${\rm vol}_\Sigma$  the natural canonical measure on $\Sigma$, and $\sigma_\Sigma$  the surface measure on $\partial \Sigma$ induced by ${\rm vol}_\Sigma$, when $\partial \Sigma\neq \emptyset$. 	
In 1969,  Bombieri, De Giorgi and Miranda \cite{BDGM} established the first universal Sobolev inequality on minimal surfaces (i.e., surfaces with vanishing mean curvature), similar to \eqref{Sobolev-000}. Shortly after their work, in the early seventies, Allard \cite{Allard} and Michael and Simon \cite{Michael-Simon} provided the first general Sobolev/isoperimetric inequality on Euclidean submanifolds, incorporating the mean curvature. Namely, Theorem 2.1 in \cite{Michael-Simon} states that if $\Sigma\subset U$ for some open set $U\subset \mathbb R^{n+m}$ such that $\partial \Sigma = \emptyset$, then for any non-negative function $g\in C^1(U)$ which vanishes outside a compact subset of $U$, one has that 
	\begin{equation}\label{Mich-Sim-estimate}
	\displaystyle  { \left(\int_{\Sigma} g^\frac{n}{n-1}d{\rm vol}_\Sigma\right)^\frac{n-1}{n}\leq
		\frac{4^{n+1}}{\omega_n^{{1}/{n}}}
		\int_{\Sigma} \left(|\nabla^\Sigma g|+|H|g\right) d{\rm vol}_\Sigma}.
\end{equation}
Let us note that the constant $\frac{4^{n+1}}{\omega_n^{{1}/{n}}}$ in \eqref{Mich-Sim-estimate} is far from being optimal (see \eqref{isop-000} for comparison). Nevertheless, its main advantage lies in being codimension-independent.

The problem of proving a Sobolev inequality of the type \eqref{Mich-Sim-estimate} with  {\it best constant} remained open until more recently when, in a series of papers, Brendle \cite{ Brendle-Toulouse, Brendle}, then Brendle and Eichmair \cite{BrendleEichmair23, BrendleEichmair24} established 
the following Sobolev/isoperimetric inequality on Euclidean submanifolds, under the additional assumption that $\Sigma$ is compact, possibly with boundary: for every positive smooth function $g$ on $\Sigma$, one has that   	
	\begin{equation}\label{Brendle-isop}
		   \left(\displaystyle\int_{\Sigma} g^\frac{n}{n-1}d{\rm vol}_\Sigma\right)^\frac{n-1}{n}\leq
			C(n,m)
			\left\{\int_{\Sigma} \sqrt{|\nabla^\Sigma g|^2+ |H|^2g^2} d{\rm vol}_\Sigma+\int_{\partial \Sigma}g\,d\sigma_\Sigma\right\},
	\end{equation}
	where \begin{equation}\label{C-n-m-0}
		C(n,m)=\max \left\{ \frac{1}{n}\left(\frac{m\omega_m}{(n+m)\omega_{n+m}}\right)^{\frac{1}{n}}, \frac{1}{n \omega_n^{\frac{1}{n}}}\right\} = 
		 \displaystyle \begin{cases}
		 	 \frac{1}{n \omega_n^{1/n}} & m \in \{1,2\} \\
		 	 \frac{1}{n}\left(\frac{m\omega_m}{(n+m)\omega_{n+m}}\right)^{\frac{1}{n}} & m \geq 3 
		 \end{cases}	.
	\end{equation}
	
These are remarkable breakthrough results, especially in codimensions $m=1$ and $m=2$, when the constant $C(n,m)$ coincides with the best Euclidean constant (see \eqref{isop-000}), therefore making it optimal.  
Let us furthermore note that, although for $m\geq 3$ the constant $C(n,m)$ in \eqref{Brendle-isop} is codimension-dependent, according to the Nash isometric embedding theorem \cite{Nash}, the codimension $m$  can be controlled from above by $m_n=\frac{3}{2}n(n+3)$ for $\Sigma$ compact and $m_n=\frac{n}{2}(3n^2+14n+9)$ for $\Sigma$ non-compact; in both cases, the Brendle constant $C(n,m) $ still produces smaller values than the Michael-Simon constant	$MS(n)\coloneqq\frac{4^{n+1}}{\omega_n^{{1}/{n}}}$ from \eqref{Mich-Sim-estimate}. 
	
Considering \eqref{Brendle-isop}, two kinds of $L^p$-Sobolev inequalities can be deduced in the setting of Euclidean submanifolds when $p>1$; for simplicity, we consider the case when $\partial \Sigma=\emptyset$. First, similarly to Cabr\'e and Miraglio \cite{Cabre-Miraglio}, by  placing $g:=|f|^{p^\star\left(1-\frac{1}{n}\right)}$  with $p>1$ into the slightly weaker inequality implied by \eqref{Brendle-isop}, namely,
		\begin{equation*}
			\displaystyle  { \left(\int_{\Sigma} g^\frac{n}{n-1}d{\rm vol}_\Sigma\right)^\frac{n-1}{n}\leq
				C(n,m)
				\int_{\Sigma} \left(|\nabla^\Sigma g|+|H|g\right)d{\rm vol}_\Sigma},
		\end{equation*}
		 then applying H\"older's inequality, 		
		%and the relation  $p'\left(p^\star(1-\frac{1}{n})-1\right)=p^\star$ 
		a simple computation yields that
\begin{equation}\label{Sobolev-change-of-functions}
	\displaystyle   \left(\int_{\Sigma} |f|^{p^\star}d{\rm vol}_\Sigma\right)^\frac{1}{p^\star}\leq
	p^\star\frac{n-1}{n}	C(n,m)
	\left(\int_{\Sigma} |\nabla^\Sigma f|^pd{\rm vol}_\Sigma\right)^\frac{1}{p} + C(n,m)\left(\int_{\Sigma}|H|^p|f|^p d{\rm vol}_\Sigma\right)^\frac{1}{p}.
\end{equation}
Second, if $\Sigma$ is minimal, i.e.,  $H\equiv 0$, then \eqref{Brendle-isop} and \eqref{Sobolev-000} combined with a  rearrangement argument and the co-area formula  imply the
 $L^p$-Sobolev inequality  
 \begin{equation}\label{Sobolev-1}
 	\displaystyle  { \left(\int_{\Sigma} |f|^{p^\star}d{\rm vol}_\Sigma\right)^{1/p^\star}\leq
 		n\omega_n^\frac{1}{n}C(n,m)AT(n,p)
 		\left(\int_{\Sigma} |\nabla^{\Sigma} f|^p d{\rm vol}_\Sigma\right)^{1/p}},\ \ \forall  f\in C_0^\infty(\Sigma).
 \end{equation}
When $m \in \{1,2\}$, we have that $C(n,m)=\frac{1}{n\omega_n^{{1}/{n}}}$, thus \eqref{Sobolev-1} is  sharp and we recover the optimal constant from the Euclidean Sobolev inequality \eqref{Sobolev-000}. This inequality was stated by Brendle \cite[Theorem 5.8]{Brendle-Toulouse} for minimal hypersurfaces.  However, for larger codimensions $m \geq 3$, \eqref{Sobolev-change-of-functions} and \eqref{Sobolev-1} are not sharp. In fact, the constants formally blow-up, since $C(n,m)\to \infty$ when $m\to \infty$, for every fixed $n\in \mathbb N$ (see  
\eqref{C-n-m-0}).

Motivated by these observations, a natural question arises: can we obtain \textit{codimension-free} $L^p$-Sobolev inequalities on submanifolds, or  inequalities which are better than \eqref{Sobolev-change-of-functions} and \eqref{Sobolev-1} for higher codimension? The purpose of this paper is to address this question in the case of \textit{minimal submanifolds}, i.e., submanifolds with vanishing mean curvature $H$. Let $C_0^\infty(\Sigma)$ be the space of  compactly supported smooth functions on $\Sigma$. Our first result is  the  following $L^p$-Sobolev inequality for $p\geq 2$: 
\begin{theorem}\label{main-theorem_p>2}
	Let $n\geq 3$ and $m\geq 1$ be integers, $2 \leq p < n$, and let  $\Sigma$ be a complete $n$-dimensional minimal submanifold  of $\mathbb R^{n+m}$ without boundary.  Then, for every $f\in C_0^\infty(\Sigma)$, one has that
	\begin{equation}\label{main-inequality-p>2}
		\displaystyle  { \left(\int_{\Sigma} |f|^{p^\star}d{\rm vol}_\Sigma\right)^{1/p^\star}\leq
			S(n,p)
			\left(\int_{\Sigma} |\nabla^\Sigma f|^p d{\rm vol}_\Sigma\right)^{1/p}},
	\end{equation} 
where
	$$S(n,p)=\frac{p^\star}{n}\left(1-\frac{1}{n}\right){p^{-\frac{1}{p}}}(2\pi)^{-\frac{1}{2}}\left(\frac{e}{n}\right)^{\frac{1}{p'}-\frac{1}{2}} \left(\frac{\Gamma(n)}{\Gamma(n/p)}
	\right)^{{1}/{n}}.$$
\end{theorem}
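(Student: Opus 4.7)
My plan is to adapt the optimal mass transport proof of the Euclidean $L^p$-Sobolev inequality of Cordero-Erausquin--Nazaret--Villani to the submanifold setting, via the ABP/transport strategy that Brendle introduced in \cite{Brendle} to establish \eqref{Brendle-isop}. The central object is the ``cylindrical'' map
\begin{equation*}
\Phi(x,y)=\nabla^\Sigma u(x)+y,\qquad x\in\Sigma,\ y\in T_x^\perp\Sigma,
\end{equation*}
where $u\colon\Sigma\to\mathbb R$ is a scalar potential transporting $f^{p^\star}\,d{\rm vol}_\Sigma$ onto a Barenblatt-type probability density on $\mathbb R^n$; a sharp analysis of the Jacobian of $\Phi$ then yields \eqref{main-inequality-p>2}.

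After replacing $f$ by $|f|$ and a standard density argument, I may assume $f\in C_0^\infty(\Sigma)$ with $f\ge 0$, and by homogeneity that $\int_\Sigma f^{p^\star}\,d{\rm vol}_\Sigma =1$. Following Cordero-Erausquin--Nazaret--Villani, I fix as target a Barenblatt profile $h(z)=(\sigma-\kappa|z|^{p'})_+^{p-1}$ on $\mathbb R^n$, with parameters $\sigma,\kappa>0$ to be optimized at the end, normalized so that $\int_{\mathbb R^n}h=1$. Brenier--McCann theory, applied intrinsically on $\Sigma$, produces a $c$-concave function $u$ whose tangential gradient $\nabla^\Sigma u$ pushes $f^{p^\star}\,d{\rm vol}_\Sigma$ forward to $h\,d\mathcal L^n$. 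I then extend this tangential transport into the normal bundle by defining $\Phi$ as above on $\widetilde\Omega=\{(x,y)\in T^\perp\Sigma:|y|\le \rho(x)\}$, where $\rho(x)$ is essentially the Barenblatt radius attached to $\nabla^\Sigma u(x)$.

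The differential $D\Phi(x,y)$ has a natural block form: an $n\times n$ tangent block equal to $\nabla^{\Sigma,2}u-\langle A(x),y\rangle$, where $A$ is the second fundamental form of $\Sigma$, together with an $m\times m$ identity block in the normal directions. On the contact set where $D\Phi\succeq 0$, the tangent trace equals $\Delta_\Sigma u-\langle H(x),y\rangle$. Applying the area formula to $\Phi$ on $\widetilde\Omega$, together with surjectivity onto the cylindrical region $\mathbb R^n\times B^m$, gives a transport identity that, after change of variables, reduces matters to a pointwise upper bound on $(\det D\Phi)^{1/(n+m)}$. I bound this factor by a \emph{weighted} AM-GM that treats the $n$ tangent eigenvalues and the $m$ unit normal eigenvalues asymmetrically; when $p\ge 2$, Young's inequality with conjugate pair $(p,p')$ absorbs the resulting $|\nabla^\Sigma u|^{p'}$ contribution via the Monge--Amp\`ere relation $h(\nabla^\Sigma u)\det(\nabla^{\Sigma,2} u)=f^{p^\star}$, leaving a pointwise estimate in terms of $f$, $|\nabla^\Sigma f|^p$, and $|H|^p f^p$. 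Integrating over $\Sigma$ and optimizing $(\sigma,\kappa)$ then delivers \eqref{main-inequality-p>2}.

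The main obstacle is precisely this weighting. The exponents in the asymmetric AM-GM must be tuned so that (i) the $n$ tangent eigenvalues produce the \emph{codimension-free} constant $S(n,p)$ -- with the Stirling-type factor $(e/n)^{1/p'-1/2}$ and the ratio $\Gamma(n)/\Gamma(n/p)$ emerging from the Barenblatt optimization -- while (ii) the codimension-dependent contribution is collected entirely onto the $|H|^p$ term, producing the factor $\bigl((n+m)/m\bigr)^{(n+m)(1/p'-1/2)/n}$ visible in $M(n,m,p)$. The restriction $p\ge 2$ enters essentially at this step: only in that range does Young's inequality with $(p,p')$ yield a remainder of the correct sign, which is exactly the same threshold encountered in the Euclidean Cordero-Erausquin--Nazaret--Villani argument. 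The companion range $1<p<2$ therefore requires a different approach and is handled separately in the paper.
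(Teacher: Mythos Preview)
There is a genuine gap in your plan, and it lies exactly where the paper's innovation is. You propose to obtain a \emph{pointwise} Monge--Amp\`ere identity $h(\nabla^\Sigma u)\det(\nabla^{\Sigma,2}u)=f^{p^\star}$ by transporting $f^{p^\star}\,d{\rm vol}_\Sigma$ to a Barenblatt profile $h$ on $\mathbb R^n$ via ``Brenier--McCann theory, applied intrinsically on $\Sigma$''. This step cannot be carried out as written: the gradient $\nabla^\Sigma u(x)$ lives in the varying tangent space $T_x\Sigma\subset\mathbb R^{n+m}$, so there is no map from $\Sigma$ to a fixed copy of $\mathbb R^n$, and hence no push-forward to $h\,d\mathcal L^n$. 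If instead one regards the source as a measure on $\mathbb R^{n+m}$, it is singular with respect to $\mathcal L^{n+m}$ and classical Brenier theory does not apply. The paper confronts precisely this obstruction: the target density is placed on the full ambient space $\mathbb R^{n+m}$ (namely $c_{n,m,p}^{-1}(1+|y|^{p'})^{-n-m/p'}$), and the generalized Brenier theorem of \cite{BK} yields only an \emph{integral} Monge--Amp\`ere relation
\[
f^{p^\star}(x)=c_{n,m,p}^{-1}\int_{A_x}(1+|\Phi(x,v)|^{p'})^{-n-m/p'}\det D\Phi(x,v)\,dv.
\]
Your subsequent claim that $\Phi$ surjects onto a product region ``$\mathbb R^n\times B^m$'' is likewise unfounded: the tangential and normal directions of $\Sigma$ do not globally split $\mathbb R^{n+m}$.

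Two further points. First, your account of where $p\ge 2$ enters is not the paper's mechanism (and the Euclidean Cordero-Erausquin--Nazaret--Villani argument in fact works for all $1<p<n$, so there is no such threshold there). In the paper, $p\ge 2$ is used so that $p'\le 2$, making $s\mapsto s^{p'/2}$ concave; this yields, for a free parameter $t\in(0,1)$, the lower bound $|\Phi(x,v)|^{p'}\ge t^{1-p'/2}|\nabla^\Sigma u|^{p'}+(1-t)^{1-p'/2}|v|^{p'}$, which is what allows the normal integrals to be evaluated in closed form. Second, the codimension-free constant $S(n,p)$ does not fall out of a single weighted AM--GM. After optimizing $t=n/(n+m)$, the paper shows that the resulting coefficient $K_{m,n,p'}$ is monotone in $m$ and then passes to the limit $m\to\infty$; the factor $(e/n)^{1/p'-1/2}$ appears from this limit via Stirling-type asymptotics of the Gamma function. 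The price for taking the supremum in $m$ is that the mean-curvature coefficient remains the $m$-dependent $M(n,m,p)$.
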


\begin{remark}\rm 
(i) First, we note that the constant $S(n,p)$ above is \textit{codimension-free}. Furthermore, as expected, $S(n,p)>AT(n,p)$. However,  by using the Stirling--L\'anczos approximation for the Gamma function, one has that 
\begin{equation}\label{asump-A-S}
	\lim_{n\to \infty}\frac{S(n,p)}{AT(n,p)}=1.
\end{equation}
 In particular, this limit implies that inequality
 \eqref{main-inequality-p>2} is 
	\textit{asymptotically sharp} for minimal submanifolds when $n\to \infty$. After the proof of Theorem \ref{main-theorem_p>2}, we point out in Remark \ref{remark-loosing-sharpness} the technical obstacle that prevents us from obtaining the optimal Sobolev constant $AT(n,p)$, yielding instead the asymptotically sharp constant $S(n,p)$.  
	
	(ii)  For simplicity, let $p=2$. Recall  $m_n=\frac{3}{2}n(n+3)$ for $\Sigma$ compact and $m_n=\frac{n}{2}(3n^2+14n+9)$ for $\Sigma$ non-compact from the Nash isometric embedding theorem. By the increasing property of $s\mapsto \frac{\Gamma(s+a)}{\Gamma(s)},$ $s>0$ for every $a>0$, it follows that  for every $n\geq 3$, one has the following chain of inequalities for the Michael--Simon constant from \eqref{Mich-Sim-estimate}, the Brendle constant from \eqref {C-n-m-0} (with Nash codimension bound for $m$), our constant  $S(n,2)$ and  the Aubin--Talenti constant $AT(n,2)$, respectively: 
	$$	MS(n)=\frac{4^{n+1}}{\omega_n^{{1}/{n}}}>C(n,m_n)>S(n,2)=\frac{n-1}{\sqrt{n(n-2)}}AT(n,2)>AT(n,2).$$
	In addition, their asymptotic behaviors are: 
	$$\lim_{n\to \infty}\frac{MS(n)}{C(n,m_n)}=+\infty;\ \ \lim_{n\to \infty}\frac{C(n,m_n)}{S(n,2)}=+\infty\ \ {\rm and}\ \ \lim_{n\to \infty}\frac{S(n,2)}{AT(n,2)}=1.$$  

\end{remark}

As a counterpart of Theorem \ref{main-theorem_p>2}, we  also derive an $L^p$-Sobolev inequality for $1 < p \leq 2$. In this case the constant is not codimension-free, but still slightly better than those in \eqref{Sobolev-change-of-functions} and \eqref{Sobolev-1} for certain ranges of the codimension $m$ and the value $p\in (1,2]$. Namely, we have: 
\begin{theorem}\label{main-theorem_p<2}
	Let $n \geq 2$ and $m\geq 1$ be integers, and $\Sigma$ be a complete $n$-dimensional minimal submanifold  of $\mathbb R^{n+m}$ without boundary. Let $1<p< 2 $ if $n=2$, and $1<p\leq 2 $ when $n \geq 3$, respectively. Then, for every $f\in C_0^\infty(\Sigma)$,
\begin{equation}\label{main-inequality-p<2}
	\displaystyle  
	\left(\int_{\Sigma} |f|^{p^\star}d{\rm vol}_\Sigma\right)^{1/p^\star} \leq \tilde S(n,m,p)
	\left(\int_\Sigma |\nabla^\Sigma f|^p d{\rm vol}_\Sigma\right)^{1/p} ,
\end{equation} 
where
$$\tilde S(n,m,p)=\frac{p^\star}{n}\left(1-\frac{1}{n}\right){p^{-\frac{1}{p}}{p'}^{-\frac{1}{p'}}}\left(\frac{\omega_m \Gamma(\frac{m}{p'}+1)}{\omega_{n+m} \Gamma(\frac{n+m}{p'}+1)}\frac{\Gamma(n)}{\Gamma(\frac{n}{p})}\right)^\frac{1}{n}.
$$ 
\end{theorem}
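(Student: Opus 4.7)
The plan is to adapt the optimal mass transport / Alexandrov--Bakelman--Pucci (ABP) strategy underlying Brendle's proof of \eqref{Brendle-isop} to the $L^p$-Sobolev setting in the subquadratic range $1<p\leq 2$. Let $f\in C_0^\infty(\Sigma)$ be non-negative and nontrivial. After a scaling and an approximation reduction (so that $f$ may be assumed smooth and strictly positive in the interior of its support), the first step is to solve an auxiliary weighted $p$-Poisson equation of the form
\begin{equation*}
	\operatorname{div}_\Sigma\!\bigl(f^{p-1}|\nabla^\Sigma u|^{p-2}\nabla^\Sigma u\bigr)= G(f)
\end{equation*}
on $\Sigma$, where $G(f)$ is a combination of powers of $f$ chosen so that the divergence-theorem compatibility condition is satisfied and so that the resulting transport will land in the appropriate target measure on $\mathbb R^{n+m}$. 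Standard $C^{1,\alpha}_{\rm loc}$-regularity for degenerate elliptic PDEs provides the baseline.

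Following Brendle, I would then introduce a transport map $\Phi:\Sigma\times\overline{B}^m\to\mathbb R^{n+m}$ of the form
\begin{equation*}
	\Phi(x,y)= |\nabla^\Sigma u(x)|^{p-2}\nabla^\Sigma u(x) + \psi(x)\,\sum_{\alpha=1}^m y_\alpha \,\nu_\alpha(x),
\end{equation*}
where $\{\nu_\alpha\}$ is a local orthonormal frame of the normal bundle of $\Sigma$ and $\psi$ is a scalar multiplier (a suitable power of $f$) calibrated so that $\Phi$ covers an explicit target region in $\mathbb R^{n+m}$ of computable Lebesgue measure. Restricting to the contact set of a supporting paraboloid, where $D\Phi$ is positive semidefinite, one applies the AM--GM inequality $|\det D\Phi|^{1/(n+m)}\leq \tfrac{1}{n+m}\,{\rm tr}(D\Phi)$. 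The trace decomposes into three pieces: the $p$-Laplacian of $u$ (prescribed by the PDE of the previous step), the pairing $\langle H(x),\,|\nabla^\Sigma u(x)|^{p-2}\nabla^\Sigma u(x)\rangle$ of the mean curvature with the tangential velocity, and an $m\,\psi(x)$ contribution from the $m$ normal directions.

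Raising the AM--GM estimate to the $(n+m)$-th power, integrating over $\Sigma\times\overline{B}^m$ against $d{\rm vol}_\Sigma\, dy$, and evaluating the normal-direction integral in polar coordinates produces the ratio $\omega_m\Gamma(\tfrac{m}{p'}+1)/\omega_{n+m}\Gamma(\tfrac{n+m}{p'}+1)$ that features in $\tilde S(n,m,p)$, with exponents generated by a $p'$-radial profile. Using the PDE to eliminate the $p$-Laplacian term, a Young / H\"older split with conjugate exponents $p$ and $p'$ then separates the mean-curvature cross term into one that is absorbed by $\int_\Sigma|\nabla^\Sigma u|^p f^{p-1}\,d{\rm vol}_\Sigma$ (and thence, via the PDE once more, by $\int_\Sigma f^{p^\star}d{\rm vol}_\Sigma$) and another proportional to $\bigl(\int_\Sigma|H|^p|f|^p\,d{\rm vol}_\Sigma\bigr)^{1/p}$ with the constant $K(n,m)$. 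Rearranging, together with the identity $p^\star=np/(n-p)$, yields \eqref{main-inequality-p<2} with the claimed $\tilde S(n,m,p)$.

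The main obstacle is specific to the subquadratic range. First, $|\nabla^\Sigma u|^{p-2}$ is singular at critical points of $u$ when $p<2$, so $\Phi$ is not smooth and the Jacobian identity has to be justified on a regular contact subset via an approximation argument. Second, unlike the superquadratic regime of Theorem \ref{main-theorem_p>2}, where a Gaussian-type target on $\mathbb R^{n+m}$ makes the tangential and normal contributions decouple in a codimension-free manner, here the appropriate target is compactly supported with a $p'$-radial profile, which prevents a codimension-free split; this structural asymmetry is precisely what forces $\tilde S(n,m,p)$ to depend on the codimension $m$.
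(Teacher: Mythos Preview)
Your outline follows Brendle's ABP strategy (solve a PDE, build a transport map by hand, apply an $(n+m)$-dimensional AM--GM on a contact set, compactly supported target), whereas the paper proceeds by pure optimal transport in the Cordero-Erausquin--Nazaret--Villani style, via Theorem~\ref{OMT-theorem-submanifold}. Concretely, the paper transports $d\mu=f^{p^\star}\,d{\rm vol}_\Sigma$ to the Talentian target $d\nu=c_{n,m,p}^{-1}(1+|y|^{p'})^{-n-m/p'}\,dy$ on all of $\mathbb R^{n+m}$; no PDE is solved, no $(n+m)$-dimensional Jacobian is used, and the target is neither compactly supported nor ``Gaussian-type''. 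The $n$-dimensional determinant--trace inequality of Theorem~\ref{OMT-theorem-submanifold}(iii) is combined with the weighted Minkowski inequality to split the Laplacian and mean-curvature contributions, and the normal-fiber integrals are evaluated against the Talentian profile using the Beta/Gamma identity~\eqref{Gamma-integration}; this is exactly where the ratio $\omega_m\Gamma(m/p'+1)\big/\omega_{n+m}\Gamma((n+m)/p'+1)$ and the factor $\Gamma(n)/\Gamma(n/p)$ come from. The role of the restriction $p\leq 2$ in the paper is not the singularity of $|\nabla^\Sigma u|^{p-2}$ that worries you, but rather that $p'\geq 2$ makes $t\mapsto t^{p'/2}$ convex, giving the clean lower bound $|\Phi(x,v)|^{p'}=(|\nabla^\Sigma u|^2+|v|^2)^{p'/2}\geq |\nabla^\Sigma u|^{p'}+|v|^{p'}$ without the auxiliary parameter $t$ needed in the proof of Theorem~\ref{main-theorem_p>2}.

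As a standalone proposal, your sketch has a genuine gap at the level of the constants. The $(n+m)$-dimensional AM--GM you invoke produces $((\text{trace})/(n+m))^{n+m}$, not the $n$-th-power structure that the statement requires; you would then need to separate tangential and normal eigenvalues \emph{before} AM--GM, and it is not explained how a compactly supported target with a ``$p'$-radial profile'' generates both the Gamma ratio above and, simultaneously, the mean-curvature constant $K(n,m)=\frac{1}{n}\bigl(\frac{m\omega_m}{(n+m)\omega_{n+m}}\bigr)^{1/n}$. In the paper these two constants arise from two \emph{different} fiber integrals ($\tilde I_1$ and $\tilde I_2$) against the same Talentian density, evaluated by~\eqref{Gamma-integration}. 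Unless you can exhibit an explicit choice of $G(f)$, $\psi$, and target profile for which your trace computation and polar integration reproduce exactly $\tilde S(n,m,p)$ and $K(n,m)$, the proposal remains heuristic; the paper's OMT route avoids all of these calibration issues because the Talentian target and the integral Monge--Amp\`ere equation of Theorem~\ref{OMT-theorem-submanifold}(v) force the constants automatically.
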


\begin{remark}\rm 
	Comparing $\tilde S(n,m,p)$ with the constants from the Sobolev inequalities \eqref{Sobolev-change-of-functions} and \eqref{Sobolev-1}, one can observe that for any $n \geq 2$ and $m \geq 1$,  
	\begin{align*}
		\lim_{p\to 1} p^\star\left(1-\frac{1}{n}\right)	C(n,m) 
		& = \lim_{p\to 1}n\omega_n^\frac{1}{n}C(n,m)AT(n,p) \\
		& = 		C(n,m) < 
		\lim_{p\to 1}\tilde S(n,m,p)=\frac{1}{n}\left(\frac{\omega_m }{\omega_{n+m} }\right)^\frac{1}{n} .
	\end{align*}
	This shows that for values of $p$ close to $1$, the constant $\tilde S(n,m,p)$ is worse than the constants appearing in \eqref{Sobolev-change-of-functions} and \eqref{Sobolev-1}. 
  	For larger values of $p\in (1,2]$, however, the constant $\tilde S(n,m,p)$ improves. For instance, when $n = 3$, $m = 4$ and $p=3/2$, one finds that 
	\begin{equation*}
	\tilde S(n,m,p)<
		 p^\star\left(1-\frac{1}{n}\right)	C(n,m) \quad \text{ and  } \quad
		 \tilde S(n,m,p) <	n\omega_n^\frac{1}{n}C(n,m)AT(n,p).
	\end{equation*}
	
	In addition,  for $p=2$, a simple calculation shows that $
	\tilde S(n,m,2)=S(n,2)$ for every $m\geq 1$. In fact, in the particular case when  $p=2$, the above two theorems coincide, which can be summarized by the following result:
	% see e.g.\ Corollary \ref{main-corollary}.
\end{remark}
 
\begin{corollary}\label{main-corollary}
	Let $n\geq 3$ and $m\geq 1$ be integers and  $\Sigma$ be a complete $n$-dimensional minimal submanifold  of $\mathbb R^{n+m}$ without boundary.  Then, for every  $f\in C_0^\infty(\Sigma)$, 
	\begin{equation}\label{main-inequality}
		\displaystyle  { \left(\int_{\Sigma} |f|^{2^\star}d{\rm vol}_\Sigma\right)^{1/2^\star}\leq
			S(n,2)
			\left(\int_{\Sigma} |\nabla^{\Sigma} f|^2 d{\rm vol}_\Sigma\right)^{1/2}},
	\end{equation} 
	where $$S(n,2)=\pi^{-\frac{1}{2}}\frac{n-1}{n(n-2)} \left(\frac{\Gamma(n)}{\Gamma(n/2)}
	\right)^{{1}/{n}}.$$
\end{corollary}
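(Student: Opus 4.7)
The plan is to observe that Corollary \ref{main-corollary} is the direct specialization of both Theorems \ref{main-theorem_p>2} and \ref{main-theorem_p<2} to the endpoint $p=2$. Since $n\geq 3$, the value $p=2$ lies in the admissible range of each theorem ($2\leq p<n$ in the first, $1<p\leq 2$ in the second), so either one suffices to conclude. No new analytic machinery is needed; the task reduces to verifying that the two families of constants collapse, at $p=2$, to the simple expressions stated in \eqref{main-inequality}.

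Applying Theorem \ref{main-theorem_p>2} with $p=2$, the mean curvature coefficient simplifies immediately: with $p'=2$, the exponent $\frac{n+m}{n}\bigl(\frac{1}{p'}-\frac{1}{2}\bigr)$ in the definition of $M(n,m,p)$ vanishes and therefore $M(n,m,2)=K(n,m)$, matching the right-hand factor in \eqref{main-inequality}. For the Dirichlet coefficient, the factor $(e/n)^{1/p'-1/2}$ equals $1$ at $p=2$, while $p^{-1/p}(2\pi)^{-1/2}=2^{-1/2}(2\pi)^{-1/2}=\tfrac{1}{2}\pi^{-1/2}$ and $\frac{p^\star}{n}\bigl(1-\tfrac{1}{n}\bigr)=\frac{2(n-1)}{n(n-2)}$; multiplying these factors produces precisely $S(n,2)=\pi^{-1/2}\frac{n-1}{n(n-2)}\bigl(\Gamma(n)/\Gamma(n/2)\bigr)^{1/n}$.

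As a consistency check I would also read off the same inequality from Theorem \ref{main-theorem_p<2} at $p=2$. The mean curvature constant there is already $K(n,m)$, so nothing is needed for that term. The only nontrivial point is that $\tilde S(n,m,2)$ is independent of $m$, and this rests on the Euclidean ball volume identity $\omega_k\Gamma(k/2+1)=\pi^{k/2}$, which gives $\frac{\omega_m\Gamma(m/2+1)}{\omega_{n+m}\Gamma((n+m)/2+1)}=\pi^{-n/2}$. Combined with $p^{-1/p}{p'}^{-1/p'}=\tfrac{1}{2}$ at $p=2$, this reproduces exactly $S(n,2)$ and shows $\tilde S(n,m,2)=S(n,2)$ for every $m\geq 1$.

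There is no genuine obstacle beyond bookkeeping; the one conceptual remark worth highlighting in the write-up is that $p=2$ is precisely the value at which the codimension dependence in $\tilde S(n,m,p)$ drops out and matches the codimension-free constant $S(n,p)$ of Theorem \ref{main-theorem_p>2}. This is the algebraic manifestation of the fact that the two ranges $p\geq 2$ and $1<p\leq 2$ overlap only at $p=2$, where the two strategies used to prove Theorems \ref{main-theorem_p>2} and \ref{main-theorem_p<2} must agree.
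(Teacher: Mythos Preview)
Your proposal is correct and matches exactly the paper's approach: the corollary is simply the $p=2$ specialization of either Theorem~\ref{main-theorem_p>2} or Theorem~\ref{main-theorem_p<2}, and the paper likewise presents it as such, noting only that a simple calculation gives $\tilde S(n,m,2)=S(n,2)$ and $M(n,m,2)=K(n,m)$. Your explicit verification of the constants (including the use of $\omega_k\Gamma(k/2+1)=\pi^{k/2}$ to see that the codimension dependence in $\tilde S$ drops out) is precisely the bookkeeping the paper leaves to the reader.
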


The proof of Theorems \ref{main-theorem_p>2} and \ref{main-theorem_p<2} relies on Optimal Mass Transportation (shortly, OMT) on Euclidean submanifolds. The idea originates from the seminal paper of Cordero-Erausquin,  Nazaret and Villani \cite{CE-N-Villani}, where sharp $L^p$-Sobolev inequalities are established by applying OMT in the Euclidean setting. The key tool in this framework is Brenier's theorem \cite{Brenier} and the resulting Monge--Amp\`ere equation, linking the normalized initial function and the Talentian bubble. 

Recent results have shown that the OMT approach can be effectively used to produce sharp Sobolev inequalities in non-Euclidean spaces as well, see, e.g., Balogh and Krist\'aly \cite{BK_Annalen, BK},  
Brendle and Eichmair \cite{BrendleEichmair23, BrendleEichmair24}, Cavalletti and Mondino \cite{C-M-inv, C-M-2}, and Krist\'aly \cite{Kristaly}.
However, in the setting of submanifolds, the situation is more delicate, as the normalized initial function is supported on the submanifold $\Sigma$, therefore the source measure with this density is not absolutely continuous. To overcome this difficulty, we apply a generalized version of Brenier's theorem from the recent work of Balogh and Krist\'aly \cite{BK}  for not necessarily compactly supported measures; the compactly supported measures have been considered first by Wang \cite{Wang}. In this setting, we obtain an integral version of the Monge--Amp\`ere equation, where the target density function has to be defined in a higher dimensional space. To address this issue, the target density function will be chosen to be a Talentian-type bubble with a suitable exponent. 
 
The paper is organized as follows. In Section \ref{section-2} we first recall the OMT result of Balogh and Krist\'aly \cite{BK}, which is a crutial element of our arguments (see Theorem \ref{OMT-theorem-submanifold} below). Then, we shall focus on the proofs of the main results, i.e., Theorems \ref{main-theorem_p>2} and \ref{main-theorem_p<2}, respectively. Finally, based again on Theorem \ref{OMT-theorem-submanifold}, in Section \ref{section-3} we provide an alternative proof for the isoperimetric inequality \eqref{Brendle-isop}, see Theorem \ref{Brendle-isop-thm}. 

\section{Proof of Theorems \ref{main-theorem_p>2} and \ref{main-theorem_p<2}}\label{section-2}

In the sequel, we shall use the  integration formula 
	\begin{equation}\label{Gamma-integration}
		\int_0^\infty(\lambda +r^\alpha)^{-\gamma}r^\beta dr=\lambda^{-\gamma+\frac{\beta+1}{\alpha}}\frac{\Gamma\left(\gamma-\frac{\beta+1}{\alpha}\right)\Gamma\left(\frac{\beta+1}{\alpha}\right)}{\alpha\Gamma\left(\gamma\right)},
	\end{equation}
	for every $\lambda>0, \alpha>1,\beta>-1$ and $\gamma>\frac{\beta+1}{\alpha}$, whose proof is elementary, based on a suitable change of variable and basic properties of the Beta and Gamma functions, see e.g., Andrews, Askey and Roy \cite{Special_Functions}.  
	
	As mentioned earlier, the proofs of our main theorems  are based on OMT arguments. In order to have a self-contained presentation, we recall below the main OMT result from Balogh and Krist\'aly \cite[Theorem 2.1]{BK}, which can be viewed as an integral version of the Brenier--McCann theorem (see McCann \cite{McCann} and Villani \cite{Villani}). A similar result was also recently obtained by Wang \cite{Wang} for compactly supported measures.

	\begin{theorem}{\rm (Balogh and Krist\'aly \cite{BK})}
		\label{OMT-theorem-submanifold} 
		Let $n\geq2$ and $m\geq 1$ be integers,  $\Sigma$ be a complete $n$-dimensional submanifold of $\mathbb R^{n+m}$, possibly with boundary,  and $\Omega\subseteq \mathbb R^{n+m}$ be an open set. Let $\mu$ and $\nu$ be Borel probability measures on $\Sigma$ and $\Omega$ which are absolutely continuous with respect to $d{\rm vol}_\Sigma$ and $d\mathcal L^{n+m},$ respectively.  
		Then there exist a measurable subset $A$ of the normal bundle $T^\perp \Sigma$ and a function $u:\Sigma \to \mathbb R \cup \{+\infty\}$ which is semiconvex in its effective domain and it is twice differentiable on the set ${\rm Pr}_{T^\perp \Sigma}(A)\subset \Sigma$, that will give rise to a map $\Phi:A\to \Omega$ given by 
		$$\Phi(x,v)=\nabla^\Sigma u(x)+v,\ \ (x,v)\in A,$$
		such that 
		\begin{itemize}
			\item[(i)]  Pythagorean's rule holds, i.e., $|\Phi(x,v)|^2=|\nabla^\Sigma u(x)|^2+|v|^2$ for every $(x,v)\in A;$
			\item[(ii)] $\mu({\rm Pr}_{T^\perp \Sigma}(A))=\nu(\Phi(A))=1;$ 
			
			\item[(iii)]   for every $(x,v)\in A$ the $n\times n$ matrix  $D_\Sigma^2u(x)-\langle II(x),v\rangle$ is symmetric and non-negative definite, and the determinant-trace inequality holds, i.e., 
			\begin{equation}\label{determinant-trace-0}
				{\rm det} D\Phi(x,v)= {\rm det}[D_\Sigma^2u(x)-\langle II(x),v\rangle]\nonumber \leq \left(\frac{\Delta_{\rm ac}^\Sigma u(x)-\langle H(x),v\rangle}{n}\right)^n,
			\end{equation}
			where $II: T\Sigma\times T\Sigma \to T^{\perp}\Sigma$ stands for the second fundamental form of $\Sigma$, and 
			 $\Delta_{\rm ac}^\Sigma u$ is the absolute continuous part of the distributional Laplacian $\Delta_\mathcal D^\Sigma u;$
			\item[(iv)] $\Delta_{\rm ac}^\Sigma u \leq  \Delta_{\mathcal D}^\Sigma u$ in the sense of distributions, i.e., $\int_{\Sigma} f \Delta_{\rm ac}^\Sigma u \leq \int_{\Sigma} f\Delta_{\mathcal D}^\Sigma u $  for any compactly supported smooth function  $f:\Sigma \to \R_+$. 
			\item[(v)] 
			if $F$ and $G$ are the density functions of the measures $\mu$ and $\nu$ with respect to the volume measures of $\Sigma$ and $\mathbb R^{n+m}$, then for  $\mu$-a.e.\ $x\in \Sigma$ we have the following integral version of  the 
			Monge--Amp\`ere equation 
			\begin{equation*}%\label{Monge-Ampere-main}
				F(x)=\int_{A\cap T_x^\perp \Sigma}G(\Phi(x,v)){\rm det} D\Phi(x,v)dv. 
			\end{equation*}
		\end{itemize}
	\end{theorem}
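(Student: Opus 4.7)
The plan is to set up a quadratic-cost Monge--Kantorovich problem between $\mu$ (on $\Sigma$) and $\nu$ (on $\Omega$), and then exploit the lower-dimensional structure of $\Sigma \subset \mathbb{R}^{n+m}$ by parametrizing the transport target through the normal bundle $T^\perp\Sigma$. First, by Kantorovich duality for the cost $c(x,y)=\frac{1}{2}|x-y|^2$ (after a standard truncation/exhaustion argument to guarantee finiteness of the cost), I extract a $c$-concave Kantorovich potential on $\Sigma$; subtracting a base-point term $\frac{1}{2}|x|^2$ recasts it as a semiconvex function $u:\Sigma \to \mathbb{R} \cup \{+\infty\}$. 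By an Alexandrov-type theorem for semiconvex functions on Riemannian manifolds, $u$ admits a second-order Taylor expansion $\mathcal{H}^n$-a.e.\ on its effective domain, giving a full $\mu$-measure set where the subsequent calculations can be carried out pointwise.

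Next, I identify the structure of the optimal map. For $\mu$-a.e.\ $x$ where $u$ is twice differentiable, any optimal target $y\in\Omega$ lies in the $c$-superdifferential of $u$ at $x$. Writing the first-order condition for a tangential perturbation $\gamma(t)\in\Sigma$ with $\gamma(0)=x$, $\gamma'(0)=w\in T_x\Sigma$ gives $\langle y-x, w\rangle = \langle \nabla^\Sigma u(x), w\rangle$ for all such $w$, so the tangential part of $y$ (relative to the paper's convention) equals $\nabla^\Sigma u(x)$ and the remainder $v=y-\nabla^\Sigma u(x)$ lies in $T_x^\perp \Sigma$. Collecting the pairs that arise this way defines a measurable set $A \subset T^\perp\Sigma$ and the map $\Phi(x,v)=\nabla^\Sigma u(x)+v$; property (i) follows from the orthogonality $\nabla^\Sigma u(x) \perp v$, and (ii) from the fact that the optimal Kantorovich plan is concentrated on the graph of $\Phi$ over $A$.

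For (iii)--(iv), I parametrize a neighborhood of $(x,v)\in T^\perp\Sigma$ by tangent--normal coordinates and compute $D\Phi$ in a moving orthonormal frame. Vertical variations in $v$ contribute the $m\times m$ identity in the normal directions, while a horizontal variation along $\Sigma$ (keeping $v$ normally parallel) differentiates $\nabla^\Sigma u$ into $D^2_\Sigma u(x)$ and produces an additional $-\langle II(x),v\rangle$ term arising from the Weingarten/shape operator (the normal frame tilts as $x$ moves on $\Sigma$). This yields the block structure with $\det D\Phi(x,v)=\det[D^2_\Sigma u(x)-\langle II(x),v\rangle]$; symmetry and non-negativity of this matrix follow from the $c$-concavity of $u$ and the fact that optimality forces this Hessian-like expression to be positive semi-definite. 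Applying the AM--GM inequality to its non-negative eigenvalues and using $\operatorname{tr}(II)=H$ gives the determinant--trace bound in (iii). Point (iv) is a general fact about semiconvex functions: $\Delta^\Sigma_{\mathcal{D}}u$ is a signed Radon measure whose absolutely continuous part $\Delta^\Sigma_{\mathrm{ac}}u$ agrees with the pointwise Alexandrov Laplacian and whose singular part is non-negative. Finally, (v) follows by testing mass conservation against an arbitrary Borel function on $\Sigma$ and using Fubini to split the change of variables along $\Phi$ into the horizontal base integral on $\Sigma$ and the vertical fiber integral over $A\cap T_x^\perp\Sigma$.

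The main obstacle is conceptual: $\mu$ is supported on the $n$-dimensional set $\Sigma$ while $\nu$ lives in the $(n+m)$-dimensional ambient space, so the classical Brenier/McCann theorem does not apply directly because the source is singular with respect to $\mathcal{L}^{n+m}$. The remedy is to work intrinsically on $\Sigma$ (with $d\mathrm{vol}_\Sigma$ as the reference) and to interpret each normal fiber $T_x^\perp\Sigma$ as supplying the ``missing'' $m$ dimensions needed to absorb the mass of $\nu$; this converts the transport from a map $\Sigma\to\Omega$ into a map $T^\perp\Sigma\supset A\to \Omega$, and forces the Monge--Amp\`ere identity to take the integral form in (v) rather than the standard pointwise one. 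The two subsidiary technical points that need careful handling are the identification of the $\langle II,v\rangle$ correction in the Jacobian (which requires an orthonormal-frame computation along $\Sigma$ together with the definition of $II$ as a second-derivative of the embedding) and the measurability of $A$, which is established by expressing $A$ as the subdifferential set of the semiconvex $u$ lifted to the normal bundle.
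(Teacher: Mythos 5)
A preliminary remark: the paper does not prove this statement at all --- Theorem \ref{OMT-theorem-submanifold} is quoted verbatim from Balogh--Krist\'aly \cite[Theorem 2.1]{BK} and used as a black box. So there is no in-paper proof to compare against; what follows assesses your outline on its own terms. Your strategy (Kantorovich duality for the quadratic cost, a semiconvex potential $u$ on $\Sigma$, Alexandrov twice-differentiability, identification of the tangential part of the displacement with $\nabla^\Sigma u$, the Weingarten correction $-\langle II,v\rangle$ in the horizontal block of $D\Phi$, AM--GM for the determinant--trace bound, and the sign of the singular part of $\Delta^\Sigma_{\mathcal D}u$ for (iv)) is indeed the strategy of the cited source and of Brendle--Eichmair \cite{BrendleEichmair23}; the conceptual picture in your final paragraph --- that the plan from $\mu$ to $\nu$ cannot be induced by a map $\Sigma\to\Omega$ and must instead be parametrized over the normal bundle --- is exactly the right one.

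The genuine gap is in part (v), which is the heart of the theorem and where your argument reduces to ``testing mass conservation and using Fubini.'' This hides all of the work. The optimal plan $\pi$ on $\Sigma\times\Omega$ disintegrates over $x\in\Sigma$ into fiber measures supported on $\{\Phi(x,v):v\in A_x\}$, and to convert the push-forward identity into the stated \emph{equality} $F(x)=\int_{A_x}G(\Phi(x,v))\,{\rm det}\,D\Phi(x,v)\,dv$ one needs (a) an area formula for the map $\Phi$ on $T^\perp\Sigma$, which is only approximately differentiable (via the Alexandrov Hessian of $u$) rather than $C^1$, (b) a.e.\ injectivity of $\Phi$ on $A$, coming from cyclical monotonicity of the support of $\pi$, and (c) a Lebesgue-differentiation argument to pass from the set-level identity $\nu(\Phi(E))=\mu({\rm Pr}_{T^\perp\Sigma}(E))$ to the pointwise density identity --- note that the easy direction of the area formula only gives an inequality, and the equality in (v) (needed so that the determinant--trace upper bound propagates to an upper bound on $F$) requires showing no mass is lost on the complement of the twice-differentiability set. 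Two further points are asserted rather than proved: the existence of a finite Kantorovich potential when $\Sigma$ is non-compact and the cost is possibly infinite (your ``truncation/exhaustion'' needs to produce a single semiconvex $u$ with $\mu$-full effective domain), and the block-triangularity of $D\Phi$ in the tangent--normal splitting (the horizontal derivative of $\nabla^\Sigma u$ has a normal component and the normal connection contributes off-diagonal terms; one must check these do not affect the determinant). None of these is fatal --- they are exactly the technical content of \cite[Theorem 2.1]{BK} --- but as written your proposal is an outline of that proof, not a proof.
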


\noindent	We are now ready to present the proofs of our main results.

	\subsection{Proof of Theorem \ref{main-theorem_p>2}}
	Let $f\in C_0^\infty(\Sigma)$ be a function such that  $$\int_\Sigma |f|^{p^\star}d{\rm vol}_\Sigma=1.$$ For simplicity, it is enough to consider the case when $f\geq 0.$ Let $\mu$ and $\nu$ be probability measures on $\Sigma$ and $\mathbb R^{n+m}$, defined as  
\begin{equation}\label{prob-measures}
	d\mu(x)=f^{p^\star}(x) d{\rm vol}_\Sigma(x) \ \ {\rm  and} \ \ d\nu(y)=c_{n,m,p}^{-1} (1+ |y|^{p'})^{-n-\frac{m}{p'}}dy.
\end{equation}
  	In particular, by \eqref{Gamma-integration}, we have that
	$$c_{n,m,p}=\int_{\mathbb R^{n+m}}(1+|y|^{p'})^{-n-\frac{m}{p'}} dy=(n+m)\omega_{n+m}\frac{\Gamma\left(\frac{n}{p}\right)\Gamma\left(\frac{n+m}{p'}\right)}{p'\Gamma\left(n+\frac{m}{p'}\right)}.$$
	
	According to Theorem \ref{OMT-theorem-submanifold}, there exists
	a measurable subset $A$ of the normal bundle $T^\perp\Sigma$ and a function $u: \Sigma \to \R \cup \{+\infty\}$, such that the mapping $\Phi: A \to \R^{n+m}$, 
	$
	\Phi(x,v)=\nabla^\Sigma u(x)+v,\ \ (x,v)\in A,
	$
	satisfies the properties stated in Theorem \ref{OMT-theorem-submanifold}. 
	In particular, since $\Sigma$ is a minimal submanifold, i.e., $H \equiv 0$, the determinant-trace inequality  from Theorem \ref{OMT-theorem-submanifold}/(iii) implies that for every $(x,v)\in A$,
	\begin{equation}\label{determinant-trace-H=0}
		0 \leq {\rm det} D\Phi(x,v) \leq \left(\frac{\Delta_{\rm ac}^\Sigma u(x)}{n}\right)^n.
	\end{equation} 
	
	Let us denote by $A_x\coloneqq A\cap T_x^\perp\Sigma$, for every $x \in \Sigma$.
	By the Monge--Amp\`ere equation (see Theorem \ref{OMT-theorem-submanifold}/(v)), one has for ${\rm vol}_\Sigma$-a.e.\ $x\in \Sigma$ that 
	\begin{equation}\label{MA-p-talenti}
		f^{p^\star}(x)=c_{n,m,p}^{-1}\int_{A_x}(1+ |\Phi(x,v)|^{p'})^{-n-\frac{m}{p'}}{\rm det} D\Phi(x,v)dv
		.
	\end{equation} 
	Raising this relation to the power $1/n$, the determinant-trace inequality \eqref{determinant-trace-H=0} implies for ${\rm vol}_\Sigma$-a.e.\ $x\in \Sigma$ that
	\begin{align}\label{log-int-talenti}
		\nonumber	f^\frac{p^\star}{n}(x)& = c_{n,m,p}^{-\frac{1}{n}}\left(\int_{A_x}(1+ |\Phi(x,v)|^{p'})^{-n-\frac{m}{p'}}{\rm det} D\Phi(x,v)dv\right)^\frac{1}{n}\\
		& \leq \nonumber c_{n,m,p}^{-\frac{1}{n}}\left(\int_{A_x}(1+ |\Phi(x,v)|^{p'})^{-n-\frac{m}{p'}}\left(\frac{\Delta_{\rm ac}^\Sigma u(x)}{n}\right)^n dv \right)^\frac{1}{n}\\
		& = c_{n,m,p}^{-\frac{1}{n}} \frac{\Delta_{\rm ac}^\Sigma u(x)}{n} \left(\int_{A_x}(1+ |\Phi(x,v)|^{p'})^{-n-\frac{m}{p'}}dv\right)^\frac{1}{n}. 
	\end{align}

Let $t\in (0,1)$ be a parameter with value to be determined later.  Since  $p\geq 2$, it follows that $1<p'=\frac{p}{p-1}\leq 2$. The concavity of $|\cdot|^\frac{p'}{2}$ combined with Pythagorean's rule (see Theorem \ref{OMT-theorem-submanifold}/(i)) give that
\begin{equation}\label{phi-concave}
	|\Phi(x,v)|^{p'}=\left(|\nabla^\Sigma u(x)|^2+|v|^2\right)^\frac{p'}{2}\geq t^{1-\frac{p'}{2}}\left|\nabla^\Sigma u(x)\right|^{p'}+(1-t)^{1-\frac{p'}{2}}|v|^{p'}.
\end{equation}
Based on this estimate, the integral formula \eqref{Gamma-integration} and a change of variable imply that 
\begin{align*}
	I(x)&\coloneqq \int_{A_x}(1+ |\Phi(x,v)|^{p'})^{-n-\frac{m}{p'}}dv\\ 
	&\leq \int_{\mathbb R^m}\left(1+ t^{1-\frac{p'}{2}}\left|\nabla^\Sigma u(x)\right|^{p'}+(1-t)^{1-\frac{p'}{2}}|v|^{p'}\right)^{-n-\frac{m}{p'}}dv\\
	&= m \omega_m \int_{0}^\infty \left(1+ t^{1-\frac{p'}{2}}\left|\nabla^\Sigma u(x)\right|^{p'}+(1-t)^{1-\frac{p'}{2}}\rho^{p'}\right)^{-n-\frac{m}{p'}} \rho^{m-1} d\rho\\
	&= m \omega_m \int_{0}^\infty \left(1+ t^{1-\frac{p'}{2}}\left|\nabla^\Sigma u(x)\right|^{p'}+r^{p'}\right)^{-n-\frac{m}{p'}} (1-t)^{(\frac{1}{2}-\frac{1}{p'})m} r^{m-1} dr\\
	&=\left(1+ t^{1-\frac{p'}{2}}\left|\nabla^\Sigma u(x)\right|^{p'}\right)^{-n}(1-t)^{(\frac{1}{2}-\frac{1}{p'})m}m\omega_m\frac{\Gamma(n)\Gamma(\frac{m}{p'})}{p'\Gamma(n+\frac{m}{p'})}.
	\end{align*}
Accordingly, let us introduce the following notation:
\begin{align*}
	C_{n,m,p,t} &=\left(c_{n,m,p}^{-1}(1-t)^{(\frac{1}{2}-\frac{1}{p'})m}m\omega_m\frac{\Gamma(n)\Gamma(\frac{m}{p'})}{p'\Gamma(n+\frac{m}{p'})}\right)^\frac{1}{n} \\
	&= \left(\frac{\omega_m \Gamma(\frac{m}{p'}+1) }{\omega_{n+m} \Gamma(\frac{n+m}{p'}+1)}  \frac{\Gamma(n)}{\Gamma(\frac{n}{p})}\right)^{\frac{1}{n}} \left(1-t\right)^{\frac{m}{n}\left(\frac{1}{2}-\frac{1}{p'}\right)}.
\end{align*}
Therefore, multiplying the estimate  \eqref{log-int-talenti}  with  $f^{p^
\star(1-\frac{1}{n})}(x)\left(1+ t^{1-\frac{p'}{2}}\left|\nabla^\Sigma u(x)\right|^{p'}\right)$ yields that for ${\rm vol}_\Sigma$-a.e.\ $x\in \Sigma$, one has
\begin{equation}\label{amit-integralni-kell}
	f^{p^\star}(x)\left(1+ t^{1-\frac{p'}{2}}\left|\nabla^\Sigma u(x)\right|^{p'}\right) \leq \frac{C_{n,m,p,t}}{n}{\Delta_{\rm ac}^\Sigma u(x)}f^{p^
	\star(1-\frac{1}{n})}(x).
\end{equation}

In the next step, we shall integrate this inequality over $\Sigma$. To this end, we first focus on the integral appearing on the right-hand side. By Theorem \ref{OMT-theorem-submanifold}/(iv),
the divergence theorem and H\"older's inequality, one has that
\begin{eqnarray*}
	\int_\Sigma{\Delta_{\rm ac}^\Sigma u}f^{p^\star(1-\frac{1}{n})}d{\rm vol}_\Sigma&\leq& \int_\Sigma{\Delta_{\mathcal D}^\Sigma u}f^{p^\star\left(1-\frac{1}{n}\right)}d{\rm vol}_\Sigma \\
	&=&-p^\star\left(1-\frac{1}{n}\right)\int_\Sigma f^{p^\star(1-\frac{1}{n})-1}\langle\nabla^\Sigma u, \nabla^\Sigma f \rangle d{\rm vol}_\Sigma\\&\leq&p^\star\left(1-\frac{1}{n}\right)\left(\int_\Sigma |\nabla^\Sigma f|^p d{\rm vol}_\Sigma\right)^\frac{1}{p} \left(\int_\Sigma f^{p^\star}|\nabla^\Sigma u|^{p'}  d{\rm vol}_\Sigma\right)^\frac{1}{p'},
\end{eqnarray*}
where we used the relation $$p'\left(p^\star(1-\frac{1}{n})-1\right)=p^\star.$$
Note that the integral term containing $|\nabla^\Sigma u|^{p'}$ is finite. Indeed, by the Monge--Amp\`ere equation \eqref{MA-p-talenti} and a change of variable, it follows by \eqref{Gamma-integration} that
\begin{eqnarray}\label{J-definition}
\nonumber	J\coloneqq\int_\Sigma f^{p^\star}|\nabla^\Sigma u|^{p'}  d{\rm vol}_\Sigma&=& c_{n,m,p}^{-1} \int_\Sigma|\nabla^\Sigma u(x)|^{p'} \int_{A_x}(1+|\Phi(x,v)|^{p'})^{-n-\frac{m}{p'}}{\rm det} D\Phi(x,v)dvd{\rm vol}_\Sigma\\&\leq& \nonumber
	c_{n,m,p}^{-1}\int_\Sigma \int_{A_x}|\Phi(x,v)|^{p'}(1+|\Phi(x,v)|^{p'})^{-n-\frac{m}{p'}}{\rm det} D\Phi(x,v)dvd{\rm vol}_\Sigma\\&=&
		c_{n,m,p}^{-1}\int_{\mathbb R^{n+m}}|y|^{p'}(1+|y|^{p'})^{-n-\frac{m}{p'}}dy = \frac{(n+m)(p-1)}{n-p} < +\infty.
\end{eqnarray}
Taking into account the above estimates, integrating \eqref{amit-integralni-kell} over $\Sigma$ yields that
\begin{equation*}
1+t^{1-\frac{p'}{2}}	J \leq  \frac{p^\star}{n}\left(1-\frac{1}{n}\right) C_{n,m,p,t}\left(\int_\Sigma |\nabla^\Sigma f|^p d{\rm vol}_\Sigma\right)^\frac{1}{p} 	J^\frac{1}{p'}.
\end{equation*}
Equivalently, we have that 
\begin{equation*}
\nonumber	1 \leq  \frac{p^\star}{n}\left(1-\frac{1}{n}\right) C_{n,m,p,t}\left(\int_\Sigma |\nabla^\Sigma f|^p d{\rm vol}_\Sigma\right)^\frac{1}{p} 	
	\frac{ J^\frac{1}{p'}}{1+t^{1-\frac{p'}{2}}	J}.
\end{equation*}
The maximization of the function $J \mapsto \frac{ J^\frac{1}{p'}}{1 + t^{1-\frac{p'}{2}} J}, \ 0\leq J \leq \frac{(n+m)(p-1)}{n-p}$ implies that 
%By Young's inequality, we can write that
\begin{equation}\label{J-Young}
	\frac{ J^\frac{1}{p'}}{1+t^{1-\frac{p'}{2}}	J}\leq \frac{t^{\frac{1}{2}-\frac{1}{p'}}}{p^\frac{1}{p}{p'}^\frac{1}{p'}}.
\end{equation}
Therefore, it follows that
\begin{equation} \label{non-sharp-estimate}
	1\leq K_{m,n,p'}(t)^\frac{1}{n}\frac{p^\star}{n}\left(1-\frac{1}{n}\right)\frac{1}{p^\frac{1}{p}{p'}^\frac{1}{p'}}\left(\frac{\Gamma(n)}{\Gamma(\frac{n}{p})}
	\right)^{\frac{1}{n}}\left(\int_\Sigma |\nabla^\Sigma f|^p d{\rm vol}_\Sigma\right)^\frac{1}{p}	,
\end{equation}
where 
$$K_{m,n,p'}(t)=\frac{\omega_m \Gamma(\frac{m}{p'}+1)}{\omega_{m+n} \Gamma(\frac{m+n}{p'}+1)}  \left((1-t)^mt^n\right)^{\frac{1}{2}-\frac{1}{p'}},\ \ t\in (0,1).$$

Next, we aim to obtain an $m$-independent estimate for the term  $K_{m,n,p'}(t)$, taking advantage of the fact that $t\in (0,1)$ can be chosen arbitrarily.
In fact, we observe that the minimum of $t\mapsto K_{m,n,p'}(t)$ is achieved for $t=\frac{n}{n+m}$, thus the previous inequality implies that  
	\begin{equation}\label{K-version}
			1\leq K_{m,n,p'}^\frac{1}{n}\frac{p^\star}{n}\left(1-\frac{1}{n}\right)\frac{1}{p^\frac{1}{p}{p'}^\frac{1}{p'}}\left(\frac{\Gamma(n)}{\Gamma(\frac{n}{p})}
		\right)^{\frac{1}{n}}\left(\int_\Sigma |\nabla^\Sigma f|^p d{\rm vol}_\Sigma\right)^\frac{1}{p},
	\end{equation}
	where 
	$$	K_{m,n,p'}=\frac{\omega_m \Gamma(\frac{m}{p'}+1)}{\omega_{m+n} \Gamma(\frac{m+n}{p'}+1)}\left(\frac{m^mn^n}{(m+n)^{m+n}}\right)^{\frac{1}{2}-\frac{1}{p'}}.$$
	We notice that the sequence 
	$m\mapsto K_{m,n,p'} $ is increasing, see Balogh and Krist\'aly \cite[Proposition A.1.]{BK}, and by the asymptotic property of the Gamma function $\Gamma(r+\alpha) \sim r^\alpha \Gamma(r)$ as $r \to \infty$ (where $\alpha \in \R$), it follows that 
	\begin{equation}\label{K-limit}
		\lim_{m\to \infty}K_{m,n,p'}={p'}^\frac{n}{p'}(2\pi)^{-\frac{n}{2}}\left(\frac{e}{n}\right)^{n(\frac{1}{p'}-\frac{1}{2})}.
	\end{equation}
	Combining relations \eqref{K-version} and \eqref{K-limit}, it follows that
	\begin{equation*} 
	1 \leq  (2\pi)^{-\frac{1}{2}}p^\frac{1}{p'}\frac{n-1}{n(n-p)}\left(\frac{e}{n}\right)^{\frac{1}{p'}-\frac{1}{2}} \left(\frac{\Gamma(n)}{\Gamma(n/p)}
	\right)^{{1}/{n}}\left(\int_\Sigma |\nabla^\Sigma f|^p d{\rm vol}_\Sigma\right)^\frac{1}{p},
	\end{equation*}
	which is \eqref{main-inequality-p>2} in the normalized case $\int_\Sigma |f|^{p^\star}d{\rm vol}_\Sigma=1$. The general case follows by usual rescaling.
	\hfill$\square$
	
\begin{remark}\rm \label{remark-loosing-sharpness}
	(i) 		Relation \eqref{asump-A-S} points out that the constant $S(n,p)$ in \eqref{main-inequality-p>2} is asymptotically sharp. In fact, a careful inspection of the proof of  Theorem \ref{main-theorem_p>2} shows that a slightly non-sharp estimate is provided  in \eqref{non-sharp-estimate} by means of \eqref{J-Young}. Indeed, the question of sharpness reduces to how accurately the value of $J$, defined in \eqref{J-definition}, can be estimated. Note that in the standard Euclidean space $\mathbb R^n$ (i.e., when no codimension is present, thus formally $m=0$), a similar argument as above allows us to \textit{compute explicitly} the value of $J$, which is $\frac{n(p-1)}{n-p}$,  providing in this way the sharp Sobolev constant $AT(n,p)$; this is due to the fact that $J$ can be given as an integral of a Talentian-type function, by using the 'pure' Monge--Amp\`ere equation  with a change of variables. In the case of submanifolds with $m\geq 1$, however, the value of $J$ can only be estimated as $J\leq \frac{(n+m)(p-1)}{n-p}$, see \eqref{J-definition}, derived from the more involved form of the integral Monge--Amp\`ere equation. Therefore, we cannot substitute this expression for $J$; instead, we can only provide a generic estimate for the term involving $J$, see \eqref{J-Young}, which slightly deteriorates the sharpness of the Sobolev constant. 
	
	(ii)  The determinant-trace inequality from Theorem \ref{OMT-theorem-submanifold}/(iii) should imply in principle  an $L^p$-Sobolev inequality of the form 	\eqref{Sobolev-change-of-functions} for not necessarily minimal submanifolds $\Sigma$. However,  we are not able to derive such a result. The difficulty comes from the  estimate of 	 the integral
	$$\int_{A_x}(1+ |\Phi(x,v)|^{p'})^{-n-\frac{m}{p'}}\left(\Delta_{\rm ac}^\Sigma u(x)-\langle H(x),v\rangle\right)^n dv,$$	
%	w.r.t. the terms 
%	$\Delta_{\rm ac}^\Sigma u$ and $H$ (corresponding 	
%	to the estimate
%  \eqref{log-int-talenti} in the case $H=0$) creates technical troubles as there are 
 	as no \textit{a priori} information is available either on the sign of $\Delta_{\rm ac}^\Sigma u(x)$ or on the size of the set $A_x$, $x\in \Sigma$. In fact, we only know that $\Delta_{\rm ac}^\Sigma u(x)-\langle H(x),v\rangle\geq 0$ for every $x\in \Sigma$ and $v\in A_x$, which seems to be insufficient for further  estimates. 
	\end{remark}

	\subsection{Proof of Theorem \ref{main-theorem_p<2}} The proof is similar to that of Theorem \ref{main-theorem_p>2}, therefore we shall focus on the differences. As before, we consider the  probability measures \eqref{prob-measures}, having also the Monge--Amp\`ere equation \eqref{MA-p-talenti} and the pointwise estimate \eqref{log-int-talenti} for ${\rm vol}_\Sigma$-a.e.\ $x\in \Sigma$, that is
	\begin{equation*}
		f^\frac{p^\star}{n}(x) \leq \frac{c_{n,m,p}^{-\frac{1}{n}}}{n} \Delta_{\rm ac}^\Sigma u(x) \left(\int_{A_x}(1+ |\Phi(x,v)|^{p'})^{-n-\frac{m}{p'}}dv\right)^\frac{1}{n}. 
	\end{equation*} 
	Now, since $p\leq 2$ (thus, $p'\geq 2$), instead of \eqref{phi-concave}, we can write 
	\begin{equation}\label{phi-estimate}
		|\Phi(x,v)|^{p'}=\left(|\nabla^\Sigma u(x)|^2+|v|^2\right)^\frac{p'}{2}\geq \left|\nabla^\Sigma u(x)\right|^{p'}+|v|^{p'}.
	\end{equation}
	By using relations \eqref{phi-estimate} and  \eqref{Gamma-integration}, one has that 
	\begin{align*}
		\tilde I(x) \coloneqq &\int_{A_x}(1+ |\Phi(x,v)|^{p'})^{-n-\frac{m}{p'}}dv\\ \leq& \int_{\mathbb R^m}\left(1+ \left|\nabla^\Sigma u(x)\right|^{p'}+|v|^{p'}\right)^{-n-\frac{m}{p'}}dv\\=&\left(1+ \left|\nabla^\Sigma u(x)\right|^{p'}\right)^{-n}m\omega_m\frac{\Gamma(n)\Gamma(\frac{m}{p'})}{p'\Gamma(n+\frac{m}{p'})}.
	\end{align*}
	Due to this estimate, we consider the constant  
	$$\tilde C_{n,m,p}=\left(c_{n,m,p}^{-1}m\omega_m\frac{\Gamma(n)\Gamma(\frac{m}{p'})}{p'\Gamma(n+\frac{m}{p'})}\right)^\frac{1}{n} = \left(\frac{\omega_m \Gamma(\frac{m}{p'}+1) }{\omega_{n+m} \Gamma(\frac{n+m}{p'}+1)}  \frac{\Gamma(n)}{\Gamma(\frac{n}{p})}\right)^{\frac{1}{n}}.$$
	Therefore, multiplying   \eqref{log-int-talenti}  with  $f^{p^
		\star(1-\frac{1}{n})}(x)\left(1+ \left|\nabla^\Sigma u(x)\right|^{p'}\right)$, we obtain for ${\rm vol}_\Sigma$-a.e.\ $x\in \Sigma$ that 
	\begin{equation}\label{amit-integralni-kell-0}
		f^{p^\star}(x)\left(1+ \left|\nabla^\Sigma u(x)\right|^{p'}\right) \leq  \frac{\tilde C_{n,m,p}}{n}{\Delta_{\rm ac}^\Sigma u(x)}f^{p^
			\star(1-\frac{1}{n})}(x).
	\end{equation}
	Recall that 
	$$J = \int_\Sigma f^{p^\star}|\nabla^\Sigma u|^{p'}  d{\rm vol}_\Sigma \leq (n+m)\frac{p-1}{n-p} <+\infty .$$
	Therefore, similarly to the proof of Theorem \ref{main-theorem_p>2}, an integration of \eqref{amit-integralni-kell-0} over $\Sigma$ yields that
	\begin{equation*}
		1 +	J  \leq \frac{p^\star}{n}\left(1-\frac{1}{n}\right) \tilde C_{n,m,p}\left(\int_\Sigma |\nabla^\Sigma f|^p d{\rm vol}_\Sigma\right)^\frac{1}{p} 	J^\frac{1}{p'},
	\end{equation*}
	which can be written equivalently as  
	\begin{equation*}
		1\leq \frac{p^\star}{n}\left(1-\frac{1}{n}\right) \tilde C_{n,m,p}\left(\int_\Sigma |\nabla^\Sigma f|^p d{\rm vol}_\Sigma\right)^\frac{1}{p} 	\frac{ J^\frac{1}{p'}}{1+J}.
	\end{equation*}
	We observe that the maximum of the function $J \mapsto \frac{ J^\frac{1}{p'}}{1+J}, \ 0\leq J \leq \frac{(n+m)(p-1)}{n-p}$ is achieved for $J = p-1 = \frac{p}{p'}$, thus 
	$$\frac{ J^\frac{1}{p'}}{1+	J}\leq \frac{ (p-1)^\frac{1}{p'}}{p} = \frac{1}{p^\frac{1}{p}{p'}^\frac{1}{p'}}.$$ 
	Hence, it follows that
	$$1\leq \tilde S(n,m,p)\left(\int_\Sigma |\nabla^\Sigma f|^p d{\rm vol}_\Sigma\right)^\frac{1}{p}	,$$
	where 
	$$\tilde S(n,m,p)=\frac{p^\star}{n}\left(1-\frac{1}{n}\right)\frac{1}{p^\frac{1}{p}{p'}^\frac{1}{p'}}\left(\frac{\omega_m \Gamma(\frac{m}{p'}+1)}{\omega_{n+m} \Gamma(\frac{n+m}{p'}+1)}\frac{\Gamma(n)}{\Gamma(\frac{n}{p})}\right)^\frac{1}{n}.$$
	This concludes the proof in the  normalized  case. 
	\hfill$\square$\\
	
	Observe that the OMT method developed by Cordero-Erausquin,  Nazaret and Villani \cite{CE-N-Villani} has also been applied by Castillon \cite{Castillon} to establish $L^p$-Sobolev inequalities on submanifolds. However, Castillon's approach introduces certain weights in the Sobolev inequalities,  which correspond to the Jacobians of the orthogonal projection  from the tangent space of the submanifold to a fixed $n$-dimensional linear subspace of the ambient space $\mathbb R^{n+m}.$ 
	
	Before closing this section, let us note that the difference between Theorems \ref{main-theorem_p>2} and \ref{main-theorem_p<2} is mainly technical. On the one hand, when $p\geq 2$ (hence $p'\leq 2$), one can exploit the concavity of $|\cdot|^{p'/2}$ combined with properties of the Gamma function, which provides a codimension-free Sobolev constant in front of the gradient term in  \eqref{main-inequality-p>2}. On the other hand, when $1<p\leq 2$, this codimension-free character of the corresponding term is lost, nevertheless, we still have an improvement of the terms  known so far. In addition, for $n \geq 3$ and $p=2$, the formally different results of Theorems \ref{main-theorem_p>2} and \ref{main-theorem_p<2} actually coincide, as  shown in Corollary \ref{main-corollary}.

\section{Unified proof of the isoperimetric inequality  \eqref{Brendle-isop} via OMT}\label{section-3}

We now present an alternative proof of the Michael--Simon--Sobolev  inequality \eqref{Brendle-isop}, based on Theorem \ref{OMT-theorem-submanifold} due to Balogh and Krist\'aly \cite{BK}. Initially, \eqref{Brendle-isop} was  established by Brendle \cite{Brendle, Brendle-Toulouse} using the Alexandrov--Bakelman--Pucci method.  Subsequently, Brendle and Eichmair \cite{BrendleEichmair23} provided an alternative proof applying OMT techniques (see also Brendle and Eichmair \cite{BrendleEichmair24}). Note that both of these approaches assumed the compactness of the minimal submanifold. By applying Theorem \ref{OMT-theorem-submanifold}, we eliminate the need for this compactness assumption. For the reader's convenience, we restate the result here:   

\begin{theorem}\label{Brendle-isop-thm}
	Let $n \geq 2$, $m \geq 1$, and $\Sigma$ be a complete $n$-dimensional submanifold of $\mathbb{R}^{n+m}$, possibly with boundary $\partial \Sigma$. Then for every non-negative $f\in C_0^\infty(\Sigma)$, one has
	$$ 
	 \bigg(\int_{\Sigma}f^{\frac{n}{n-1}} d{\rm vol}_\Sigma\bigg)^{\frac{n-1}{n}} \leq C(n,m) \left\{\int_{\Sigma}\sqrt{|\nabla^{\Sigma}f|^2+f^2|H|^2} d{\rm vol}_\Sigma + \int_{\partial\Sigma}f d\sigma_\Sigma \right\} ,$$
	where 
	 \begin{equation*}
	 	C(n,m)=\max \left\{ \frac{1}{n}\left(\frac{m\omega_m}{(n+m)\omega_{n+m}}\right)^{\frac{1}{n}}, \frac{1}{n \omega_n^{\frac{1}{n}}}\right\}. 
	 	%= 
	 	%\displaystyle \begin{cases}	
	 	%	\frac{1}{n \omega_n^{1/n}} & m \in \{1,2\} \\
	 	%	\frac{1}{n}\left(\frac{m\omega_m}{(n+m)\omega_{n+m}}\right)^{\frac{1}{n}} & m \geq 3 
	 	%\end{cases}	.
	 \end{equation*} 
	Moreover, the constant $C(n,m)$ is sharp for $m = 1$ and $m = 2$.
\end{theorem}

Note that if $\Sigma$ is compact, Theorem \ref{Brendle-isop-thm} can be applied to any smooth positive function on $\Sigma$; in particular, \eqref{Brendle-isop} follows directly from Theorem \ref{Brendle-isop-thm}.

%Note that if $f\in C_0^\infty(\Sigma)$, the last term on the right-hand side of the above inequality vanishes and we recover the version stated in \eqref{Brendle-isop}.

For the proof, we first establish the following preliminary result:
\begin{lemma}\label{property_of_alpha}
	Let $n \geq 2$, $m \geq 1$, and $\rho:[0, \infty) \rightarrow[0, \infty)$ be a measurable function such that $\rho(s) = 0$ for $s>1$, satisfying 
	\begin{equation*}%\label{rho_density}
		\int_{\bar{B}^{n+m}} \rho\left(|y|^2\right) d y = 1.
	\end{equation*}
	Denoting by  
	\begin{equation*}%\label{alpha}
		\alpha_\rho \coloneqq \sup_{z\in \R^n} \int_{\{v\in \R^m: |z|^2+|v|^2 \leq 1\}} \rho(|z|^2+|v|^2)dv,
	\end{equation*}
	we have that
	\begin{equation} \label{alpha_lb}
		\alpha_\rho  \geq \max \left\{\frac{m \omega_{m}}{(n+m)\omega_{n+m}}, \frac{1}{\omega_{n}}\right\} 
		= 
		\displaystyle 
		\begin{cases}
			\frac{1}{\omega_n}, & \text{ if } ~ m = 1 \\
			\frac{1}{\omega_n} = \frac{m\omega_m}{(n+m)\omega_{n+m}},  & \text{ if } ~ m = 2 \\
			\frac{m\omega_m}{(n+m)\omega_{n+m}}, & \text{ if } ~ m \geq 3 
		\end{cases}	.
	\end{equation}		
\end{lemma}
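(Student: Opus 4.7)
The plan is to exploit the rotational symmetry of the integrand in the $v$-variable and to deduce the two factors appearing in the maximum on the right-hand side of \eqref{alpha_lb} from two complementary uses of the normalization $\int_{\bar B^{n+m}}\rho(|y|^2)\,dy=1$. Since $\rho(|z|^2+|v|^2)$ depends on $z$ only through $r=|z|$, I would introduce the radial profile
$$
g(r) \;=\; \int_{\{v\in\R^m:\,r^2+|v|^2\leq 1\}}\rho(r^2+|v|^2)\,dv \;=\; m\omega_m\int_0^{\sqrt{1-r^2}}\rho(r^2+s^2)\,s^{m-1}\,ds,
$$
for $r\in[0,1]$, extended by zero for $r>1$. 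Then $\alpha_\rho=\sup_{r\geq 0}g(r)$, and the lemma reduces to the two independent lower bounds $\alpha_\rho\geq 1/\omega_n$ and $\alpha_\rho\geq m\omega_m/((n+m)\omega_{n+m})$.

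For the first bound I would average $g$ against the normalization. By Fubini applied to $(z,v)\in\R^n\times\R^m$ identified with $y\in\R^{n+m}$,
$$
1 \;=\; \int_{\bar B^{n+m}}\rho(|y|^2)\,dy \;=\; \int_{\R^n}g(|z|)\,dz \;=\; \int_{B^n}g(|z|)\,dz \;\leq\; \alpha_\rho\,\omega_n,
$$
since $g(|z|)$ vanishes for $|z|>1$ and is pointwise bounded by $\alpha_\rho$. Dividing yields $\alpha_\rho\geq 1/\omega_n$. For the second bound I would evaluate $g$ at $r=0$, so that $\alpha_\rho\geq g(0)=m\omega_m\int_0^1\rho(s^2)s^{m-1}\,ds$, and compare this with the normalization written in polar coordinates on $\R^{n+m}$, namely $(n+m)\omega_{n+m}\int_0^1\rho(s^2)s^{n+m-1}\,ds=1$. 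Because $s^{m-1}\geq s^{n+m-1}$ on $[0,1]$ and $\rho\geq 0$, the pointwise comparison gives
$$
g(0) \;\geq\; m\omega_m\int_0^1\rho(s^2)s^{n+m-1}\,ds \;=\; \frac{m\omega_m}{(n+m)\omega_{n+m}}.
$$
Taking the maximum of the two bounds produces \eqref{alpha_lb}.

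No step is a real obstacle here; the only conceptual point is recognizing that the normalization should be invoked in two distinct ways, once through Fubini over the ambient unit ball to average $g$, and once directly in radial form on $\R^{n+m}$ to lower-bound $g(0)$. The final identification of the maximum as $1/\omega_n$ for $m\in\{1,2\}$ (with equality for $m=2$) and as $m\omega_m/((n+m)\omega_{n+m})$ for $m\geq 3$ is then a short elementary computation using the standard Gamma-function expression for $\omega_k$.
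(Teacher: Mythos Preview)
Your proof is correct and essentially the same as the paper's: both derive $\alpha_\rho\geq 1/\omega_n$ by slicing $\bar B^{n+m}$ as $\int_{\bar B^n}g(|z|)\,dz$ and bounding the inner integral by $\alpha_\rho$, and both obtain $\alpha_\rho\geq m\omega_m/((n+m)\omega_{n+m})$ by evaluating at $r=0$ and using $s^{m-1}\geq s^{n+m-1}$ on $[0,1]$. The only minor difference is that the paper is more explicit about the case analysis, invoking Gautschi's inequality for $m=1$ and a reference of Cerone for $m\geq 3$, whereas your remark that it follows from the Gamma-function formula for $\omega_k$ is also fine (it reduces to the monotonicity of $x\mapsto B(x,n/2)$).
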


\begin{proof}
	First, we shall focus on the inequality on the left-hand side of the preceding relation.		
	Note that  the condition $\int_{\bar{B}^{n+m}} \rho\left(|y|^2\right) d y = 1$ can be equivalently written as 
	$$ (n+m) \omega_{n+m} \int_{0}^{1} \rho\left(t^{2}\right) t^{n+m-1} d t = 1. $$ 
	Using this relation, we can write 
	\begin{align*} 
		\alpha_\rho &= m \omega_{m}  \sup _{r \in[0,1]} \int_{0}^{1} \rho\left(r^{2}+t^{2}\right)  t^{m-1} d t 
		\geq m \omega_{m} \int_{0}^{1} \rho\left(t^{2}\right)  t^{m-1} d t  \\
		&\geq m \omega_{m}  \int_{0}^{1} \rho\left(t^{2}\right) t^{n+m-1} d t  
		= \frac{m \omega_{m}}{(n+m)\omega_{n+m}}.
	\end{align*}
For future reference, observe that there is no function $\rho$  that attains equality in the second inequality above. Therefore, to achieve a value of $\alpha_{\rho}$ approaching $\frac{m \omega_{m}}{(n+m)\omega_{n+m}}$, $\rho$ must be concentrated near $t=1$.

	Next, in order to prove the inequality in  \eqref{alpha_lb}, we shall use the fact that for every $n \geq 2$ and $m\geq1$, the closed unit ball in $\R^{n+m}$ may be represented as 
	$$ \bar{B}^{n+m}=\left\{(z, v) \in \R^{n+m}: z \in \bar{B}^{n}, ~ v \in \bar{B}^m(z, \sqrt{1-|z|^{2}}) \right\}, $$
	where $\bar{B}^m(z, \sqrt{1-|z|^{2}})$ denotes the closed ball in $\R^m$ with center $z$ and radius $\sqrt{1-|z|^{2}}$. 
	Consequently, we have that 
	\begin{align}\label{alpha_m1}
		1 &= \int_{\bar{B}^{n+m}} \rho(|y|^{2}) dy = \int_{\bar{B}^{n}} \left(\int_{\bar{B}^m(z, \sqrt{1-|z|^2})} \rho (|z|^{2}+|v|^{2}) d v\right) d z \nonumber \\
		& \leq
		\omega_{n} \cdot \sup_{z \in \bar{B}^n} \int_{\bar{B}^m(z, \sqrt{1-|z|^2})} \rho(|z|^{2}+|v|^{2}) d v \nonumber  = \omega_{n} \alpha_\rho .
	\end{align}
	Note that in order to achieve the equality $\alpha_{\rho} = \frac{1}{\omega_{n}}$, $\rho$ needs to be chosen 
	with the property that the function
	$$ z\mapsto \int_{\bar{B}^m(z, \sqrt{1-|z|^2})} \rho(|z|^{2}+|v|^{2}) d v $$
	is constant. We will demonstrate that this choice is indeed possible in the case when $m=1$. 
	
	The second part of relation \eqref{alpha_lb} can be verified using standard properties of the Beta and Gamma functions. Indeed, for $m=1$, Gautschi's inequality gives
%	$$\frac{\Gamma\left(\frac{n+1}{2} \right)}{\Gamma\left(\frac{n}{2}+1 \right)} < \sqrt{\pi},$$
%	which is equivalent to
	${(n+1)\omega_{n+1}} > {2}\omega_{n}.$
	For $m=2$, the standard recursion formula for the Gamma function yields 
	${(n+2)\omega_{n+2}} ={2\pi} \omega_n.$
	Finally, by Cerone \cite[Corollary 1, p.\ 79]{Cerone}, it follows that   
	${(n+m)\omega_{n+m}}{} <  m \omega_{m}\omega_{n}$ for every $m \geq  3.$	
\end{proof}

% ----------

\begin{proof}[Proof of Theorem \ref{Brendle-isop-thm}]
Let $f\in C_0^\infty(\Sigma)$ be a non-negative function on $\Sigma$ such that 
	\begin{equation}\label{normalization}
		\int_\Sigma f(x)^{\frac{n}{n-1}}d{\rm vol}_\Sigma(x) = 1,
	\end{equation} 
	and 
	 $\rho:[0, \infty) \rightarrow[0, \infty)$ be a measurable function such that $\rho(s)=0$ when $s>1$ and  
	\begin{equation}\label{rho_density}
		\int_{\bar{B}^{n+m}} \rho\left(|y|^2\right) d y = 1,
	\end{equation} 
	where, as before, $\bar{B}^{n+m}=\left\{y \in \mathbb{R}^{n+m}\right.$ : $|y| \leq 1\}$ is the closed unit ball in $\mathbb{R}^{n+m}$.
	We consider the probability measures $\mu$ and $\nu$ on $\Sigma$ and $\R^{n+m}$, respectively, defined as
	$$d\mu(x) = f^\frac{n}{n-1}(x)d \text{vol}_\Sigma(x) \quad\text{ and }\quad d\nu(y) = \rho(|y|^2) dy.$$	
	
	According to Theorem \ref{OMT-theorem-submanifold}, there exist
	a measurable subset $A \subset T^\perp\Sigma$ and a function $u: \Sigma \to \R \cup \{+\infty\}$ such that the mapping $\Phi: A \to \R^{n+m}$ given by
	$
	\Phi(x,v)=\nabla^\Sigma u(x)+v
	$
	satisfies the properties stated in Theorem \ref{OMT-theorem-submanifold}.
	Let $A_x \coloneqq A \cap T_x^\perp \Sigma$ for every $x \in \Sigma$.
%	$$\det D\Phi(x,v)=\det[D_\Sigma^2u(x)-\langle II(x),v\rangle]\leq \left(\frac{\Delta_{\text{ac}}^\Sigma u(x)-\langle H(x),v\rangle}{n}\right)^n$$
	By the integral version of the  Monge--Amp\`{e}re equation (see Theorem \ref{OMT-theorem-submanifold}/(v)), we have for ${\rm vol}_\Sigma$-a.e.\ $x\in \Sigma$ that 
	$$f^\frac{n}{n-1}(x) = \int_{A_x} \rho(|\Phi(x,v)|^2) \mathrm{det}D\Phi(x,v)dv.$$
	Note that since $\nu(\Phi(A)) = 1$ (see Theorem \ref{OMT-theorem-submanifold}/(ii)) and $\rho(s) = 0, \forall s >1$, it follows that 
	$$|\Phi(x,v)|^2 = |\nabla^{\Sigma}u(x)|^2 + |v|^2 \leq 1, \quad \mu-\text{a.e. }x \in \Sigma, \ v \in A_x,$$
	thus 
	$$|v|^2 \leq 1 - |\nabla^{\Sigma}u(x)|^2, \quad  
	\mu-\text{a.e. }x \in \Sigma, \  v\in A_x . $$
	Therefore, applying the determinant-trace inequality from Theorem \ref{OMT-theorem-submanifold}/(iii) yields that
	\begin{align*}
		f^\frac{n}{n-1}(x) &\leq \int_{A_x} \rho(|\Phi(x,v)|^2) \left(\frac{\Delta_{\text{ac}}^\Sigma u(x)-\langle H(x),v\rangle}{n}\right)^n dv \\
		&\leq \int_{A_x} \rho(|\Phi(x,v)|^2) \left(\frac{\Delta_{\text{ac}}^\Sigma u(x)+ |H(x)||v|}{n}\right)^n dv \\
		&\leq \left(\frac{\Delta_{\text{ac}}^\Sigma u(x)+ |H(x)|\sqrt{1 - |\nabla^{\Sigma}u(x)|^2}}{n}\right)^n \int_{A_x} \rho(|\nabla^{\Sigma}u(x)|^2 + |v|^2)  dv ,
	\end{align*}
	for ${\rm vol}_\Sigma$-a.e.\ $x\in \Sigma$.
	Similarly to Brendle and Eichmair \cite{BrendleEichmair23}, let us denote by 
	\begin{equation*}%\label{alpha}
		\alpha_\rho \coloneqq \sup_{z\in \R^n} \int_{\{v\in \R^m: |z|^2+|v|^2 \leq 1\}} \rho(|z|^2+|v|^2)dv,
	\end{equation*}
	which is a positive number depending on the density $\rho$.
	Hence, it follows that 
	\begin{equation*}
		f^\frac{n}{n-1}(x) \leq \alpha_\rho \cdot \left(\frac{\Delta_{\text{ac}}^\Sigma u(x)+ |H(x)|\sqrt{1 - |\nabla^{\Sigma}u(x)|^2}}{n}\right)^n,
	\end{equation*}
	for ${\rm vol}_\Sigma$-a.e.\ $x\in \Sigma$.
	Raising this estimate to the power $1/n$, then multiplying by $f(x)$ and integrating over $\Sigma$, it follows that 
	\begin{equation}\label{div_tetel_elotti}
		n \alpha_\rho^{-\frac{1}{n}} \int_\Sigma f^\frac{n}{n-1}  d{\rm vol}_\Sigma
		\leq \int_\Sigma \Delta_{\text{ac}}^\Sigma u f d{\rm vol}_\Sigma + \int_\Sigma  |H|f \sqrt{1 - |\nabla^{\Sigma}u|^2} d{\rm vol}_\Sigma. 
	\end{equation}
Since $f\in C_0^\infty(\Sigma)$ is non-negative,	property (iv) from  Theorem \ref{OMT-theorem-submanifold} yields that
	\begin{equation}\label{from-Prop-V}
		\int_\Sigma f{\Delta_{\rm ac}^\Sigma u}   
		\leq \int_\Sigma f \, \Delta_{\mathcal D}^\Sigma u.
	\end{equation}
	If $\partial \Sigma \neq \emptyset$, in order to estimate the right-hand side, let us consider the sequence of cut-off functions $(\varphi_k)_{k \in \mathbb{N}}$ such that  $0 \le \varphi_k \le 1$, $\varphi_k \to 1$ pointwise on $\Sigma$, $|\nabla^\Sigma \varphi_k| \leq Ck$ for some fixed constant $C>1$ independent of $k$, and whose supports exhaust $\Sigma$. In particular, we define $(\varphi_k)_{k}$ as
	$$
	\varphi_k(x)
	=
	\begin{cases}
		~ 1, & \text{if } \operatorname{dist}(x,\partial\Sigma) \ge \frac{2}{k}, \\[6pt]
		~ \text{a smooth transition between $1$ and $0$}, 
		& \text{if } \frac{1}{k} \le \operatorname{dist}(x,\partial\Sigma) \le \frac{2}{k}, \\[6pt]
		~ 0, & \text{if } \operatorname{dist}(x,\partial\Sigma) \le \frac{1}{k}.
	\end{cases}
	$$
	Since $\varphi_k f \in C_0^\infty(\Sigma \setminus \partial \Sigma)$ and $u$ is semiconvex,  the divergence theorem implies that
	\begin{align}\label{divergence-thm}	
		\int_\Sigma f \, \Delta_{\mathcal D}^\Sigma u &= \lim_{k \to \infty} \int_\Sigma \varphi_k f \,  \Delta_{\mathcal D}^\Sigma u = \lim_{k \to \infty} \int_\Sigma u \, \Delta^\Sigma(\varphi_k f) \, d\text{vol}_\Sigma \nonumber \\
		&= \lim_{k \to \infty} \left( -\int_{\Sigma} \langle \nabla^\Sigma (\varphi_k f), \nabla^\Sigma u \rangle d{\rm vol}_\Sigma \right) \nonumber \\
		&= \lim_{k \to \infty} \left(-\int_{\Sigma} f \langle \nabla^\Sigma \varphi_k, \nabla^\Sigma u \rangle d{\rm vol}_\Sigma - \int_{\Sigma} \varphi_k \langle \nabla^\Sigma f, \nabla^\Sigma u \rangle d{\rm vol}_\Sigma \right).
	\end{align}
	Recall that $|\nabla^\Sigma u(x)| \leq 1$ for $\mu$--a.e. $x\in \Sigma$. Then, by the dominated convergence theorem, we have
	\begin{equation*}
		\lim_{k \to \infty} \int_{\Sigma} \varphi_k \langle \nabla^\Sigma f, \nabla^\Sigma u \rangle d{\rm vol}_\Sigma =  \int_\Sigma
		\langle \nabla^\Sigma f, \nabla^\Sigma u\rangle
		\, d\mathrm{vol}_\Sigma .
	\end{equation*}
	Furthermore,
	$$
	\left| 	\int_\Sigma
	f \langle \nabla^\Sigma \varphi_k, \nabla^\Sigma u\rangle  d\mathrm{vol}_\Sigma  \right|
	\leq 	\int_{\Omega_k} f |\nabla^\Sigma \varphi_k| \, d\mathrm{vol}_\Sigma,
	$$  
	where $
	\Omega_k = \left\{ 1/k \leq \operatorname{dist}(x,\partial\Sigma) \leq 2/k \right\}.
	$
	Letting $k\to\infty$ and using the coarea formula, we obtain that
	$$
	\limsup_{k\to\infty}
	\left| 	\int_\Sigma
	f \langle \nabla^\Sigma \varphi_k, \nabla^\Sigma u\rangle  d{\rm vol}_\Sigma 	\right|
	\leq 	\int_{\partial\Sigma} f \, d\sigma_\Sigma .
	$$
	Consequently, \eqref{divergence-thm} yields the estimate
	\begin{equation*}
		\int_\Sigma f \Delta_{\mathcal D}^\Sigma u 
		\leq \int_{\partial\Sigma} f d{\sigma}_\Sigma   - \int_{\Sigma} \langle \nabla^\Sigma f, \nabla^\Sigma u \rangle d{\rm vol}_\Sigma .
	\end{equation*}
	Combining this with \eqref{div_tetel_elotti} and \eqref{from-Prop-V}, it follows that
	\begin{equation*}
		n \alpha_\rho^{-\frac{1}{n}} \int_\Sigma f^\frac{n}{n-1}  d{\rm vol}_\Sigma
		\leq \int_{\partial\Sigma} f d{\sigma}_\Sigma   + \int_{\Sigma} |\nabla^\Sigma f| \cdot |\nabla^\Sigma u| d{\rm vol}_\Sigma  + \int_\Sigma  |H|f \sqrt{1 - |\nabla^{\Sigma}u|^2} d{\rm vol}_\Sigma. 
	\end{equation*}
	Therefore, by applying the elementary inequality 
	$$ab+cd \leq \sqrt{a^2+c^2} \sqrt{b^2+d^2}, \quad \forall a,b,c,d \in \R, $$
	to the choices  $a= |\nabla^{\Sigma} f|, b= |\nabla^{\Sigma} u|, c= |Hf|,  d= \sqrt{1-|\nabla^{\Sigma} u|^2}$, and taking note of the normalization \eqref{normalization}, 
	we obtain that
	\begin{equation}\label{isop_ineq_with_alpha}
		n \alpha_\rho^{-\frac{1}{n}} \left(\int_\Sigma f^\frac{n}{n-1} d{\rm vol}_\Sigma \right)^\frac{n-1}{n} \leq 
		\int_{\Sigma} \sqrt{|\nabla^\Sigma f|^2 + f^2 |H|^2 } d{\rm vol}_\Sigma + \int_{\partial\Sigma} f d{\sigma}_\Sigma .
	\end{equation}
	In the case when $\partial \Sigma = \emptyset$, by a similar method we obtain \eqref{isop_ineq_with_alpha} without the boundary term on the right-hand side.

	Finally, in order to recover the constant from Theorem \ref{Brendle-isop-thm}, one needs to make a suitable choice of the density function $\rho$. Based on Lemma \ref{property_of_alpha}, we distinguish the cases $m=1$ and $m \geq 2$. 
	Firstly, when $m=1$, 
%	$$
%	\int_{-\sqrt{1- |u|^{2}}}^{\sqrt{1-|u|^{2}}} \rho_j\left(|u|^{2}+r^{2}\right) d r
%	$$
%	does not depend on $|u|$ (see \eqref{alpha_m1}).
	let $\rho: [0, \infty) \to [0, \infty)$ be given by  
	\begin{equation}\label{rho-fgv}
		\rho(s) =  
		\begin{cases} 
			\frac{c}{\sqrt{1-s}}&  s \in [0,1) \\
			0 & s \geq 1
		\end{cases}, 
	\end{equation} 
	where $c$ is a positive constant such that 
	\begin{equation*}
		\int_{\bar{B}^{n+1}} \rho(|y|^{2}) dy = \int_{\bar{B}^{n}} \left(\int_{-\sqrt{1-|z|^2}}^{\sqrt{1-|z|^2}} \rho (|z|^{2}+v^{2}) d v\right) d z = 1 .
	\end{equation*}
	For every $z \in B^n$, we have by \eqref{rho-fgv} that
	\begin{align*}
		\frac{1}{c}\int_{-\sqrt{1-|z|^{2}}}^{\sqrt{1-|z|^{2}}} \rho\left(|z|^{2}+v^{2}\right) d v
		= \pi.
%		\int_{-\sqrt{1-|z|^{2}}}^{\sqrt{1-|z|^{2}}} \frac{1}{\sqrt{1-|z|^{2}-v^{2}}} d v  \\
%		& = \int_{-\sqrt{1-|z|^{2}}}^{\sqrt{1-|z|^{2}}} \frac{1}{\sqrt{1-|z|^{2}}} \frac{1}{\sqrt{1-\left(\frac{v}{\sqrt{1-|z|^{2}}}\right)^{2}}}dv \\
%		& =\int_{-1}^{1} \frac{1}{\sqrt{1-r^{2}}} d r  = \pi.
	\end{align*}
	Consequently, $c = \frac{1}{\pi \omega_n}$.
	On the other hand, by the previous argument, we have that
	\begin{equation*}
		1 = \int_{\bar{B}^{n}} \left(\int_{-\sqrt{1-|z|^2}}^{\sqrt{1-|z|^2}} \rho (|z|^{2}+v^{2}) d v\right) d z = 
		\omega_{n} \cdot \sup_{z \in \bar{B}^n} \int_{- \sqrt{1-|z|^2}}^{\sqrt{1-|z|^2}} \rho(|z|^{2}+v^{2}) d v \nonumber  = \omega_{n} \alpha_\rho ,
	\end{equation*}
	thus $\alpha_\rho = \frac{1}{\omega_{n}}$. Applying \eqref{isop_ineq_with_alpha} and Lemma \ref{property_of_alpha} completes the proof for the case $m=1$.

%	Note that this definition of $\rho$ is in concordance with the construction presented in Brendle and Eichmair \cite{BrendleEichmair24}.
%	Moreover, it turns out that this is the only suitable choice of $\rho$.	

	If $m \geq 2$, let us consider the density functions $\rho_j: [0, \infty) \to [0, \infty)$, $j \in \mathbb{N}^*$, defined by
	\begin{equation*}
		\rho_j(s) =  
		\begin{cases} 
			c_j s^j&  s \in [0,1] \\
			0 & s > 1
		\end{cases}, \quad \text{ where } \quad 
		c_j = \frac{2j+n+m}{(n+m)\omega_{n+m}}
	\end{equation*} 
	is chosen such that 
	$$\int_{\bar{B}^{n+m}} \rho_j\left(|y|^2\right) d y = %(n+m)\omega_{n+m} c_j \int_{0}^1 s^{2j+n+m-1} d s = 
	1.$$ 
	In this case, we have that
	\begin{align*}
		\alpha_{\rho_j}
		&= \sup_{r \in [0,1]} \int_{\{v\in \R^m:~ r^2+|v|^2 \leq 1\}} \rho_j(r^2+|v|^2)dv 
		=  m \omega_m  \sup_{r \in [0,1]}\int_0^{\sqrt{1-r^2}} \rho_j(r^2+t^2) t^{m-1} dt \\
		&= m \omega_m c_j \sup_{r\in [0,1]} \int_0^{\sqrt{1-r^2}} (r^2+t^2)^j t^{m-1} dt 
		\leq m \omega_m \frac{c_j }{2j+m},
	\end{align*}
	where we used the estimate
	\begin{equation*}
		\int_0^{\sqrt{1-r^2}} (r^2+t^2)^j t^{m-1} dt 
		\leq \int_0^{\sqrt{1-r^2}} (r^2+t^2)^j (r^2+t^2)^\frac{m-2}{2} t ~ dt 
		 \leq \frac{1}{2}   \int_{0}^{1} u^{j+\frac{m}{2}-1} du = \frac{1}{2j+m}.
	\end{equation*}
	Therefore, it follows that for each $j \in \mathbb{N}\setminus \{0\}$, 
	$$\alpha_{\rho_j} \leq \frac{ m \omega_m}{(n+m)\omega_{n+m}} \frac{2j+n+m}{2j+m}. $$ 
	On the other hand, considering the lower bound in  \eqref{alpha_lb}, we obtain that
	\begin{equation*}%\label{alpha_limsup}
		\lim_{j \to \infty} \alpha_{\rho_j} = \frac{ m \omega_m}{(n+m)\omega_{n+m}}.
	\end{equation*}
	This yields the desired constant in Theorem \ref{Brendle-isop-thm} in the case when $m \geq 2$.
\end{proof}

	\begin{remark}\rm 
		Considering inequality \eqref{isop_ineq_with_alpha} and the lower bound \eqref{alpha_lb}, the best constant attainable by this method is indeed the value appearing in Theorem \ref{Brendle-isop-thm}. Due to Lemma \ref{property_of_alpha}, this yields the optimal isoperimetric constant  when $m=1$ or $m=2$.   However, when $m \geq 3$, a sharp isoperimetric inequality cannot be achieved by this construction.   
	\end{remark}
	
	\vspace{0.5cm}
	
	\noindent {\bf Acknowledgment.} The authors thank Alessio Figalli and Kai-Hsiang Wang  for their comments on an earlier version of the manuscript. We also thank the anonymous referee for a useful remark that led to an improvement in one of the proofs.

%	Finally, for the sake of completeness, we provide a proof for the integral formula \eqref{Gamma-integration}.

%	\begin{lemma}%\label{Gamma-integration}
%		For every $\lambda>0, \alpha>1,\beta>-1$ and $\gamma>\frac{\beta+1}{\alpha}$, one has that
%		$$\int_0^\infty(\lambda +r^\alpha)^{-\gamma}r^\beta dr=\lambda^{-\gamma+\frac{\beta+1}{\alpha}}\frac{\Gamma\left(\gamma-\frac{\beta+1}{\alpha}\right)\Gamma\left(\frac{\beta+1}{\alpha}\right)}{\alpha\Gamma\left(\gamma\right)}$$
%	\end{lemma}
%	
%	\begin{proof}

%	For every $\lambda>0, \alpha>1, \beta>-1$ and $\gamma>\frac{\beta+1}{\alpha}$,	
%	we have that
%		\begin{align*}
%		 \int_0^{\infty}\left(\lambda+r^\alpha\right)^{-\gamma} r^\beta d r &= \int_0^{\infty} \lambda^{-\gamma} \left(1+\frac{r^\alpha}{\lambda}\right)^{-\gamma}r^\beta dr \hspace{3cm} (r^\alpha \coloneqq \lambda u )\\
%			& =\lambda^{-\gamma+\frac{\beta+1}{\alpha}} \frac{1}{\alpha} \int_0^{\infty}(1+u)^{-\gamma} \cdot u^{\frac{\beta+1}{\alpha}-1} d u \hspace{1.35cm} (t \coloneqq (1+ u)^{-1} )  \\
%			& =\lambda^{-\gamma+\frac{\beta+1}{\alpha}}  \frac{1}{\alpha}  \int_0^1 t^{\gamma-\frac{\beta+1}{\alpha}-1}(1-t)^{\frac{\beta+1}{\alpha}-1} d t \\
%	%		& =\lambda^{-\gamma+\frac{\beta+1}{\alpha}} \frac{1}{\alpha} B\left(a-\frac{\beta+1}{\alpha}, \frac{\beta+1}{\alpha}\right) \\
%			& =\lambda^{-\gamma+\frac{\beta+1}{\alpha}}  \frac{
%				\Gamma\left(\gamma-\frac{\beta+1}{\alpha}\right) \Gamma\left(\frac{\beta+1}{\alpha}\right)}{\alpha\Gamma(\gamma)} ,
%		\end{align*}
%		where we used the definition of the Beta function. 
%	\end{proof}


\begin{thebibliography}{99}
	
	\bibitem{Allard} W.\ K. Allard, On the first variation of a varifold. \textit{Ann. of Math.} 95 (1972), no. 3,  417--491.
	
	\bibitem{Special_Functions} G.~E. Andrews, R. Askey, and R. Roy, \textit{Special Functions}. Cambridge University Press, 1999.
	
	\bibitem{Aubin} Th. Aubin,  Probl\`emes isop\'erim\'etriques et espaces de Sobolev. \textit{J. Differential Geom.} 11 (1976), no. 4, 573--598.  
	
	\bibitem{BK} Z.~M. Balogh and  A. Krist\'aly, Logarithmic-Sobolev inequalities on non-compact Euclidean submanifolds: sharpness and rigidity. \textit{J. Eur. Math Soc.} (accepted) January 2026, DOI:10.4171/JEMS/1773.  
	
	\bibitem{BK_Annalen} Z.~M. Balogh and A. Krist\'aly, Sharp isoperimetric and Sobolev inequalities in spaces with nonnegative Ricci curvature. \textit{Math. Ann.} 385 (2023), no. 3-4, 1747--1773.
	
	\bibitem{BDGM} E. Bombieri, E. De Giorgi, and M. Miranda,  
	Una maggiorazione a priori relativa alle ipersuperfici minimali non parametriche.  
	\textit{Arch. Rational Mech. Anal.} 32 (1969), 255--267. 
	
	\bibitem{Brendle-Toulouse} S. Brendle, Minimal hypersurfaces and geometric inequalities. \textit{Ann. Fac. Sci. Toulouse Math.} (6) 32 (2023), no. 1, 179--201.
			
	\bibitem{Brendle} S. Brendle,  The isoperimetric inequality for a minimal submanifold in Euclidean space. \textit{J. Amer. Math. Soc.} 34 (2021), no. 2, 595--603.
	
	\bibitem{BrendleEichmair23}
	S. Brendle and M. Eichmair, Proof of the {M}ichael--{S}imon--{S}obolev inequality using
	optimal transport. \textit{J. Reine Angew. Math.} 804 (2023), 1--10.
	
	\bibitem{BrendleEichmair24}
	S. Brendle and M. Eichmair, The isoperimetric inequality. \textit{Notices Amer. Math. Soc.} 71 (2024), no. 6, 708--713.
		
	
	\bibitem{Brenier} Y. Brenier, Polar factorization and monotone rearrangement of vector-valued functions. \textit{Comm. Pure Appl. Math.} 44 (4) (1991), 375--417.
	
	\bibitem{C-M-inv} F. Cavalletti and A. Mondino,  Sharp and rigid isoperimetric inequalities in metric-measure spaces with lower Ricci curvature bounds. \textit{Invent. Math.} 208 (2017), no. 3, 803--849.
		
	\bibitem{C-M-2} F. Cavalletti and A. Mondino, Sharp geometric and functional inequalities in
	metric measure spaces with lower Ricci curvature bounds.  \textit{Geom. Topol.} 21 (2017), 603--645. 
	
%\bibitem{Cabre-Cozzi}	X. Cabr\'e and M. Cozzi, A gradient estimate for nonlocal minimal graphs. \textit{Duke Math.} J. 168
%	(2019), no. 5, 775--848.
	
%\bibitem{Cabre-Cozzi-Csato}	X. Cabr\'e, M. Cozzi,  and Gy. Csató\'o, A fractional Michael--Simon Sobolev inequality on convex hypersurfaces. \textit{Ann. Inst. H. Poincar\'e C Anal. Non Lin\'eaire} 40 (2023), no. 1, 185--214.

	\bibitem{Cabre-Miraglio} X. Cabr\'{e} and P. Miraglio, Universal Hardy--Sobolev inequalities on hypersurfaces of Euclidean space. \textit{Commun. Contemp. Math.} 24 (2022), no. 5, paper no. 2150063.
		
	\bibitem{Castillon} P. Castillon, Submanifolds, isoperimetric inequalities and optimal transportation. \textit{J.
	Funct. Anal.} 259 (2010), no. 1, 79--103.	


	\bibitem{Cerone}  P. Cerone, Special functions: approximations and bounds. \textit{Appl. Anal. Discrete Math}. 1 (2007), no. 1, 72--91.

	\bibitem{CE-N-Villani}  D. Cordero-Erausquin, B. Nazaret, and C. Villani, A mass-transportation approach to sharp Sobolev and Gagliardo--Nirenberg inequalities. \textit{Adv. Math.} 182 (2004), no. 2, 307--332.

	\bibitem{Kristaly} A. Krist\'aly, Sharp Sobolev inequalities on noncompact Riemannian manifolds with ${\rm Ric} \geq 0$ via Optimal Transport theory, \textit{Calc. Var. Partial Differential Equations} 63:200 (2024). 

	\bibitem{McCann} R. J. McCann,  Polar factorization of maps on Riemannian manifolds. \textit{Geom.\ Funct.\ Anal.} 11  (2001), no. 3,  589--608.
		
	\bibitem{Michael-Simon} J. H. Michael and L. M. Simon, Sobolev and mean-value inequalities on generalized submanifolds of $\mathbb{R}^n$. \textit{Comm. Pure Appl. Math.} 26 (1973), 361--379.
	
	\bibitem{Nash} J. Nash,  The imbedding problem for Riemannian manifolds. \textit{Ann. of Math}. Second Series. 63 (1956), no. 1, 20--63. 
	
	\bibitem{Talenti} G.	Talenti,  Best constant in Sobolev inequality. \textit{Ann. Mat. Pura Appl.} 110 (1976), 353--372.  		

	\bibitem{Villani} C. Villani,  \textit{Optimal transport. Old and new.} Grundlehren der Mathematischen Wissenschaften [Fundamental Principles of Mathematical Sciences], 338. Springer-Verlag, Berlin, 2009.
	
	\bibitem{Wang} K.-H. Wang, Optimal transport approach to Michael-Simon-Sobolev inequalities in
	manifolds with intermediate Ricci curvature lower bounds. \textit{Ann. Glob. Anal. Geom.} 65:7 (2024). 
		
		
		
%		\bibitem{Ledoux-1} M. Ledoux, The geometry of Markov diffusion generators, 
%		Annales de la facult\'e des sciences de Toulouse 6e s\'erie, tome 9, no 2
%		(2000), p. 305--366.
%		
%		\bibitem{Ledoux-2} M. Ledoux,  On manifolds with non-negative Ricci curvature and Sobolev inequalities. Comm. Anal. Geom. 7(2), (1999),  347--353.  
		
		
		%	\bibitem{AFM} V. Agostiniani, M. Fogagnolo, L.  Mazzieri,  Sharp geometric inequalities for closed hypersurfaces in manifolds with nonnegative Ricci curvature. \textit{Invent. Math.} 222(3) (2020), 1033--1101.   
		%	
		%	\bibitem{ACM} L. Ambrosio, A. Carlotto, A. Massaccesi, 
		%	Lectures on elliptic partial differential equations.
		%	Appunti. Scuola Normale Superiore di Pisa (Nuova Serie) [Lecture Notes. Scuola Normale Superiore di Pisa (New Series)], 18. Edizioni della Normale, Pisa, 2018.
		%	
		%	
		%	
		%	\bibitem{Aubin} T. Aubin, Probl\`emes isop\'erim\'etriques et espaces de Sobolev. \textit{J. Differential Geom.} 11 (4) (1976), 573--598.
		%	
		%	
		%	\bibitem{BKT} Z.M. Balogh, A. Krist\'aly, F. Tripaldi, Sharp log-Sobolev inequalities in ${\sf CD}(0,N)$ spaces with applications.  \textit{J. Funct. Anal.} 286 (2024), no. 2, Paper No. 110217, 41 pp.
		%	
		%	
		%	\bibitem{Brendle} S. Brendle,  Sobolev inequalities in manifolds with nonnegative curvature. \textit{Comm. Pure. Appl. Math.}  76 (2023), no. 9, 2192--2218.
		%	
		%	\bibitem{CM} G. Catino, D.D. Monticelli,  Semilinear elliptic equations on manifolds with nonnegative Ricci curvature. arXiv: 2203.03345  (2022). 
		%	
		%	\bibitem{CMR} G. Catino, D.D. Monticelli, A. Roncoroni, On the critical $p$-Laplace equation. \textit{Adv. Math.} 433 (2023), 109331.  
		%	
		%	
		%	\bibitem{Chou} K.S. Chou, C.W. Chu, On the best constants for a weighted Sobolev-Hardy inequality. \textit{J. London
			%		Math. Soc.} 48 (1993), 137--151.
		%	
		%	%	\bibitem{CE-thesis} D. Cordero-Erausquin, In\'egalit\'es g\'eom\'etriques, Ph.D. Thesis, Univ. Marne la Vall\'ee, 2000.
		%	
		%	\bibitem{CEMS} D. Cordero-Erausquin, R.J. McCann, M. Schmuckenschl\"{a}ger, 
		%	A Riemannian interpolation inequality \`{a} la Borell, Brascamp and Lieb. \textit{Invent. Math.} 146, (2001), 219--257.
		
		
		%	
		%	\bibitem{C-SC} T. Coulhon, L. Saloff-Coste, Isop\'erim\'etrie pour les groupes et les vari\'et\'es, \textit{Rev. Mat. Iberoam.} 9 (2) (1993) 293--314. 
		%	
		%	\bibitem{delPino} M. Del Pino, J. Dolbeault, The optimal Euclidean $L^p$-Sobolev logarithmic inequality. \textit{J. Funct. Anal.} 197 (2003),
		%	no. 1, 151--161.
		%	
		%	\bibitem{doCarmo} M.P. do Carmo, Riemannian Geometry, Birkh\"auser, Boston, 1992. 
		%	
		%	\bibitem{doCarmo-Xia} M.P. do Carmo, C. Xia,
		%	Complete manifolds with non-negative Ricci curvature and the
		%	Caffarelli-Kohn-Nirenberg inequalities. \textit{Compos. Math.} {140} (2004),
		%	818--826.
		%	
		%	\bibitem{Druet-Hebey}  O. Druet, E. Hebey, The AB program in geometric analysis: sharp Sobolev inequalities and related
		%	problems. Preprint, Univ. Cergy-Pontoise, 2000.
		%	
		%	\bibitem{FF} A. Fathi, A. Figalli,  Optimal transportation on non-compact manifolds. \textit{Israel J. Math.} 175 (2010), 1--59. 
		%	
		%	\bibitem{Figalli} A. Figalli,  Existence, uniqueness, and regularity of optimal transport maps. \textit{SIAM J. Math. Anal.} 39 (2007), no. 1, 126--137.
		%	
		%	\bibitem{olaszok-1} M. Fogagnolo, A. Malchiodi,  L. Mazzieri, 
		%	A note on the critical Laplace equation and Ricci curvature.
		%	\textit{J. Geom. Anal.} 33 (2023), no. 6, Paper No. 178, 17 pp. 
		%	
		%	\bibitem{FM} M. Fogagnolo,   L. Mazzieri, Minimising hulls, $p$-capacity and isoperimetric inequality on complete Riemannian manifolds. 
		%	\textit{J. Funct. Anal.} 283 (2022), no. 9, Paper No. 109638, 49 pp. 
		%	
		%	
		%	\bibitem{Gentil} I. Gentil, The general optimal $L^p$-Euclidean logarithmic Sobolev inequality by Hamilton-Jacobi equations. \textit{J.
			%		Funct. Anal.} 202(2) (2003), 591--599.
		%	
		%	\bibitem{G-M} M. Gromov, V. Milman,  Generalization of the spherical isoperimetric inequality to uniformly convex Banach spaces. \textit{Compos. Math.} 62(1987), no. 3,  263--282. 
		%	
		%	
		%	\bibitem{Gross} L. Gross, Logarithmic Sobolev inequalities. \textit{Amer. J. Math.} 97 (1975), no. 4, 1061--1083.
		%	
		%	\bibitem{Hebey} E. Hebey, Nonlinear analysis on manifolds: Sobolev spaces and inequalities. Courant Lecture Notes in Mathematics, 5. New York University, Courant Institute of Mathematical Sciences, New York; American Mathematical Society, Providence, RI (1999).
		%	
		%	\bibitem{KLS} R. Kannan, L.\ Lov\'asz, M. Simonovits, Isoperimetric problems for convex bodies and a
		%	localization lemma. \textit{Discret. Comput. Geom.} 13(1995) no. 3–4, 541--559.  
		%	
		%	
		%	\bibitem{Klartag} B. Klartag,  
		%	Needle decompositions in Riemannian geometry. 
		%	\textit{Mem. Amer. Math. Soc.} 249 (2017), no. 1180, 77 pp.
		%	
		%	\bibitem{Kristaly-Calculus} A. Krist\'aly,  Metric measure spaces supporting Gagliardo-Nirenberg inequalities:\ volume non-collapsing and rigidities. \textit{Calc. Var. Partial Differential Equations} 55 (2016), no. 5, Art. 112, 27 pp.
		%	
		%	\bibitem{Kristaly-JMPA} A. Krist\'aly,   Sharp uncertainty principles on Riemannian manifolds: the influence of curvature. \textit{J. Math. Pures Appl.} (9) 119 (2018), 326--346.
		%	
		%	\bibitem{Kristaly-Ohta} A. Krist\'aly, S. Ohta, 
		%	Caffarelli-Kohn-Nirenberg inequality on metric measure spaces with applications. 
		%	\textit{	Math. Ann.} 357 (2013), no. 2, 711--726. 
		%	
		%	\bibitem{Ledoux} M. Ledoux,  On manifolds with non-negative Ricci curvature and Sobolev inequalities. \textit{Comm. Anal. Geom.} 7(2),  (1999),  347--353. 
		%	
		%	\bibitem{LS}	L. Lov\'asz, M.  Simonovits, Random walks in a convex body and an improved volume
		%	algorithm. \textit{Random Struct. Algorithms} 4 (1993), no. 4, 359--412. 
		%	
		%	
		%	\bibitem{LV}	J. Lott, C. Villani,  Ricci curvature for metric measure spaces via optimal transport. \textit{Ann. Math.} (2) 169 (2009), no. 3, 903--991.  
		%	
		%	\bibitem{Mao} J. Mao,  The Caffarelli-Kohn-Nirenberg inequalities and manifolds with nonnegative weighted Ricci curvature. \textit{J. Math. Anal. Appl.} 428 (2015), no. 2, 866--881.
		%	
		%	\bibitem{McCann} R.J. McCann, Polar factorization of maps on Riemannian manifolds. \textit{Geom. Funct. Anal.} 11 (3) (2001) 589--608. 
		%	
		%	\bibitem{McCann-Duke} R.J. McCann, Existence and uniqueness of monotone measure-preserving maps.
		%	\textit{Duke Math. J.} 80 (1995), no. 2, 309--323. 
		%	
		%	\bibitem{NV} F. Nobili, I.Y. Violo, 
		%	Rigidity and almost rigidity of Sobolev inequalities on compact spaces with lower Ricci curvature bounds. 
		%	\textit{Calc. Var. Partial Differential Equations} 61 (2022), no. 5, Art. 180, 65 pp. 
		%	
		%	\bibitem{NV-adv} F. Nobili, I.Y. Violo,   Stability of Sobolev inequalities on Riemannian manifolds with Ricci curvature lower bounds. \textit{Adv. Math.} 440 (2024), Paper No. 109521, 58 pp. 
		%	
		%	\bibitem{Talenti} G. Talenti, Best constants in Sobolev inequality. \textit{Ann. Mat. Pura Appl.} (IV) 110 (1976), 353--372.
		%	
		%	\bibitem{WLC} W. Tokura, L. Adriano, C. Xia,  The Caffarelli-Kohn-Nirenberg inequality on metric measure spaces.\textit{ Manuscripta Math.} 165 (2021), no. 1-2, 35--59.
		%	
		%	\bibitem{Petersen} P. Petersen, Riemannian geometry, Third edition, Graduate Texts in Mathematics, Springer, 2016.
		%	
		%	\bibitem{Sturm} K.-T. Sturm,  On the geometry of metric measure spaces. I. \textit{Acta Math.} 196 (2006), no. 1,  65--131.  
		%	
		%	\bibitem{Xia} C. Xia, Complete manifolds with nonnegative Ricci curvature and almost best Sobolev constant.
		%	\textit{Illinois J. Math.} 45 (2001), 1253--1259.
		%	
		%	\bibitem{Xia2} C.  Xia,  The Gagliardo-Nirenberg inequalities and manifolds of non-negative Ricci curvature. \textit{J. Funct. Anal.} 224 (2005), no. 1, 230--241. 
		%	
		%	\bibitem{Villani} C. Villani,  Optimal transport. Old and new. Grundlehren der mathematischen Wissenschaften [Fundamental Principles of Mathematical Sciences], 338. Springer-Verlag, Berlin, 2009.
		%	
		%	\bibitem{WZ} Y. Wang, X. Zhang, An Alexandroff--Bakelman--Pucci estimate on Riemannian manifolds. \textit{Adv. Math.} 232 (2013) 499--512.
		%	
		%	\bibitem{Weissler} F.B. Weissler, Logarithmic Sobolev inequalities for the heat-diffusion semigroup. \textit{Trans. Amer. Math. Soc.} 237
		%	(1978), 255--69.
\end{thebibliography}
\end{document}